\newtheorem{theorem}{Theorem}[section]
\newtheorem{remark}[theorem]{Remark}
\newtheorem{lemma}[theorem]{Lemma}
\newtheorem{definition}[theorem]{Definition}
\def\req#1{{\rm(\ref{eq:#1})}}
\newcommand{\labeq}[1]{\label{eq:#1}}
\def\norm#1{\hspace{0.2ex} \|#1\| \hspace{0.2ex}}
\newcommand{\R}{\ensuremath{\mathbbm{R}}} 
\newcommand{\N}{\ensuremath{\mathbbm{N}}} 
\newcommand{\range}{\mathcal R}
\newcommand{\kernel}{\mathcal N}
\newcommand{\dx}[1][x]{\ensuremath{\ {\rm{d}} #1}}
\newcommand{\supp}{\ensuremath{\mathrm{supp}}}
\newcommand{\kommentar}[1]{}
\begin{document}

\title[Interpolation of missing electrode data in EIT]{Interpolation of missing electrode data in electrical impedance tomography\footnotemark}

\renewcommand{\footnoterule}{%
  \kern -3pt
  \hrule width \textwidth height 1pt
  \kern 2pt
}
\footnotetext{%
\scriptsize
This is an author-created, un-copyedited version of an article published in \emph{Inverse Problems} \textbf{31}(11), 115008, 2015.
IOP Publishing Ltd is not responsible for any errors or omissions in this version of the manuscript or any version derived from it. The Version of Record is available online at 
\url{http://dx.doi.org/10.1088/0266-5611/31/11/115008}.
}     
     
\author{Bastian Harrach$^{1}$}
\address{$^{1}$ Institute of Mathematics, Goethe University Frankfurt, Germany}
\ead{\mailto{harrach@math.uni-frankfurt.de}}

\begin{abstract}
Novel reconstruction methods for electrical impedance tomography (EIT) often require voltage measurements on current-driven electrodes.
Such measurements are notoriously difficult to obtain in practice as they tend to be affected by unknown contact impedances
and require problematic simultaneous measurements of voltage and current.

In this work, we develop an interpolation method that predicts the voltages on current-driven electrodes from 
the more reliable measurements on current-free electrodes for difference EIT settings, where a conductivity change
is to be recovered from difference measurements.
Our new method requires the a-priori knowledge of an upper bound of the conductivity change, and utilizes
this bound to interpolate in a way that is consistent with the special geometry-specific smoothness of difference EIT data.

Our new interpolation method is computationally cheap enough to allow for real-time applications, and 
simple to implement as it can be formulated with the standard sensitivity matrix. We numerically evaluate
the accuracy of the interpolated data and demonstrate the feasibility of using interpolated measurements for a 
monotonicity-based reconstruction method.
\end{abstract}

\ams{
35R30, 
35R05 
35J25 
}

\section{Introduction}
\label{Sec:intro}

Electrical impedance tomography (EIT) is a novel technique that 
images the con\-duc\-tiv\-i\-ty distribution inside a subject from electric voltage and current
measurements on the subject's boundary. EIT has promising applications in several fields
including medical imaging, geophysics, nondestructive material testing and
monitoring of industrial processes. For a broad overview on the developments in EIT in the last decades let us refer to 
\cite{henderson1978impedance,barber1984applied,wexler1985impedance,newell1988electric,metherall1996three,cheney1999electrical,borcea2002electrical,borcea2003addendum,lionheart2003eit,holder2004electrical,bayford2006bioimpedance,uhlmann2009electrical,adler2011electrical,martinsen2011bioimpedance,seo2013electrical}, and the references therein.

The inverse problem of reconstructing the conductivity from voltage-current-measurements is known to be highly ill-posed and non-linear,
and the reconstructions suffer from an enormous sensitivity to modeling and measurement errors.
To alleviate this problem, most applications concentrate on reconstructing a conductivity change from the difference of two
measurements, which is less affected by modeling errors, cf.\ the above cited works for time-difference EIT
and \cite{seo2008frequency,harrach2009detecting,harrach2010factorization} for weighted frequency-difference EIT.
In some applications one may further restrict the problem to recovering only the outer support of the conductivity change (the so-called anomaly or inclusion detection problem), which is unaffected by linearization errors, cf.\ \cite{harrach2010exact}.

In practice, the inverse problem of difference EIT is usually linearized, and the
conductivity change is then recovered from minimizing a regularized version of the linearized residuum functional, see subsection~\ref{subsect:linearized}. This approach is well-established and very flexible as it can incorporate all available measurements
in the residuum functional. However, there is almost no theoretical justification for this approach.
Standard convergence results for linearization-based iterative solvers for non-linear inverse problems 
are based on certain assumptions, the so-called source conditions and the tangential cone condition. 
It is not clear, whether realistic conductivity distributions fulfill the required abstract source conditions, 
and the validity of the tangential cone condition is a long-standing open problem in EIT, cf.\ \cite{lechleiter2008newton} for 
a recent contribution.

On the other hand, rigorously justified reconstruction methods for EIT have been developed in the more mathematically-oriented 
community. An explicit reconstruction formula that is based on the global uniqueness proof of Nachman \cite{nachman1996global}
is known as the d-bar method, cf., e.g., \cite{siltanen2000implementation,knudsen2002inverse,knudsen2003new,knudsen2004numerical,
isaacson2004reconstructions,knudsen2005reconstruction,isaacson2006imaging,knudsen2007d,Knudsen2009}.
Rigorously justified inclusion detection methods for EIT include the enclosure method
(see \cite{ikehata1999draw,Bru00,ikehata2000reconstruction,ikehata2000numerical,ikehata2002regularized,ikehata2004electrical,ide2007probing,uhlmann2008reconstructing,ide2010local}),
 the Factorization Method (see
\cite{Han03,Geb05,kirsch2005factorization,azzouz2007factorization,Han07,Hyv07,lechleiter2008factorization,Nac07_2,GH07,gebauer2008localized,hakula2009computation,harrach2009detecting,Schmitt2009,harrach2010factorization,schmitt2011factorization,haddar2013numerical,chaulet2014factorization,choi2014regularizing,barthdetecting} and the recent overviews
\cite{Kirsch_book07,hanke2011sampling,harrach2013recent}),
and the recently emerging Monotonicity Method \cite{Tamburrino02,Tamburrino06,harrach2013monotonicity,Zhou2015Monotonicity_preprint}.

Rigorously justified reconstruction methods rely on interpreting the measurements as (an approximation to) the continuous
Neumann-to-Dirichlet (NtD) operator of the underlying partial differential equation, and the operator structure plays a major role in the methods' theoretical foundation. In practical EIT applications, the continuous NtD-operator has to be replaced by the 
discrete matrix of voltage-current measurements on a finite number of electrodes.
Still, in that case certain rigorous properties can be guaranteed, cf. \cite{harrach2015resolution},
and rigorously justified methods have successfully been 
applied to real data situations, cf., e.g. \cite{azzouz2007factorization,harrach2010factorization,Zhou2015Monotonicity_preprint}.

Practical applications of rigorously justified methods require, however, a full matrix of measurements 
that includes voltages on current-driven electrodes. Such measurements are notoriously difficult to obtain in practice as they tend to be affected by unknown contact impedances and require problematic simultaneous measurements of voltage and current.
Accordingly, there is a long-standing disagreement in the engineering community whether practical EIT systems should take voltage measurents on current-driven electrodes. Some groups have successfully developed systems that utilize these measurements, cf. Rensselaer's ACT 4 system \cite{saulnier2007electrical} or the system of the Dartmouth group
\cite{halter2004design,halter2008broadband}.
However, in several other systems, including the recently commercially launched PulmoVista\textsuperscript{\textregistered} 500 from Dr\"ager Medical, voltage measurements on current-driven electrodes are not taken.

In this work, we develop an interpolation method that predicts the voltages on current-driven electrodes from 
the more reliable measurements on current-free electrodes for difference EIT settings, where a conductivity change
is to be recovered from difference measurements. 
Our new method is based on utilizing the special geometry-specific smoothness of difference EIT data in the following way.
The difference of two electric potentials for different conductivities solves an elliptic PDE with homogeneous boundary data and a source term generated by the conductivity change. The smoothness of the measured boundary voltage differences will depend on how far the conductivity change is supported away from the boundary. We assume the a-priori knowledge of an upper bound $B$ of the conductivity change. With this bound we calculate minimal source terms on $B$ that are consistent with the measurements on the current-free electrodes. The minimal measurement-consistent source terms are then used to calculate the missing voltages on the current-driven electrodes.

We will show that this new interpolation procedure uniquely determines the missing voltages on current-driven electrodes, and derive an analytic formula to calculate the interpolated voltages. Notably, 
our new interpolation method is computationally cheap enough to allow for real-time applications, and 
simple to implement as it can be formulated with the standard sensitivity matrix. For a setting with $m$ electrodes,
the interpolated voltages can be obtained by solving a linear system with a $m\times m$-matrix that is constructed from the columns of the sensitivity matrix, see theorem~\ref{thm:main} and remark~\ref{remark:main} for the details.

The paper is organized as follows. In section \ref{Sec:missing}, we describe a typical difference EIT setting with
adjacent-adjacent current driving patterns, and summarize the standard linearized reconstruction method and the novel
matrix-based monotonicity method. Section \ref{section:main} contains our main result on how to interpolate
the voltages on current-driven electrodes using geometry-specific smoothness of difference EIT data.
In section \ref{sect:numerics}, we illustrate our new interpolation method
and numerically evaluate the interpolation error.
We also demonstrate the feasibility of using interpolated measurements for a monotonicity-based reconstruction method
for two- and three-dimensional numerical examples.

\section{Missing electrode data in electrical impedance tomography}
\label{Sec:missing}

\subsection{The setting}
\label{Subsec:setting}

Let $\Omega\subset \R^n$, $n \geq 2$
be a bounded domain describing the imaging domain and let $\sigma\in L^\infty_+(\Omega)$ be its conductivity distribution.
$\Omega$ is assumed to have piecewise smooth boundary $\partial \Omega$ with outer normal
vector $\nu$. $L_+^\infty$ denotes the subspace of $L^\infty$-functions with positive essential infima.

For electrical impedance tomography (EIT), several electrodes $\mathcal E_l\subset \partial \Omega$, $l=1,\ldots,m$, are attached to the imaging domain's surface. We assume that the electrodes $E_l$ are relatively open and connected subsets of $\partial \Omega$, that they are 
perfectly conducting and that contact impedances are negligible (the so-called \emph{shunt model}, cf., e.g., \cite{cheney1999electrical}). 
When a current $I_l\in \R$ is driven through the $l$-th electrode, with $\sum_{l=1}^m I_l=0$, the electric potential inside the imaging domain is given by the solution $u_{\sigma}\in H^1(\Omega)$ of
\begin{eqnarray}
\labeq{shunt1} \nabla \cdot (\sigma \nabla u_{\sigma}) = 0 & \quad  \mbox{ in } \Omega,\\
\labeq{shunt2} \int_{\mathcal E_l} \sigma \partial_\nu u_{\sigma} \dx[s] = I_l & \quad \mbox{ for } l=1,\ldots,m,\\
\labeq{shunt3} \sigma \partial_\nu u_{\sigma} =0 & \quad  \mbox{ on } \partial \Omega\setminus \bigcup_{l=1}^m \mathcal E_l,\\
\labeq{shunt4} u_{\sigma}|_{\mathcal E_l}=\mathrm{const.} & \quad \mbox{ for all } l=1,\ldots,m.
\end{eqnarray}
$u_{\sigma}$ is uniquely determined up to the addition of constant functions.


In a standard adjacent-adjacent driving configuration the voltage-current-mea\-sure\-ments are carried out in the following way. For each $k=1,\ldots,m$, we drive a current of $I_k=1$ and $I_{k+1}=-1$ through the $k$-th pair of electrodes $(\mathcal E_k, \mathcal E_{k+1})$ while all other electrodes are kept insulated. Here, and throughout the paper, the 
electrode index is always considered modulo $m$, i.e. the index $m+1$ refers to the first electrode, and the index $0$ refers to the $m$-th electrode.

The resulting electric potential $u^{(k)}$ solves \req{shunt1}--\req{shunt4} with 
\[
I_l:=\delta_{k,l} - \delta_{k+1,l}\quad \mbox{ for $l=1,\ldots,m$.}
\]
While keeping up the electric current between the $k$-th electrode pair, we measure the volt\-age between
the $j$-th pair of electrodes, i.e., 
\begin{equation}\labeq{Def_Ujk}
U_{jk}(\sigma):=u^{(k)}_\sigma|_{\mathcal E_j}-u^{(k)}_\sigma|_{\mathcal E_{j+1}}.
\end{equation}

Repeating this process for all $j,k=1,\ldots,m$ we obtain the measurement matrix
\[
\fl
U(\sigma)=\left(
\begin{array}{c c c c c c}
\colorbox{gray}{$U_{11}(\sigma)$} & \colorbox{gray}{$U_{12}(\sigma)$} & U_{13}(\sigma) & \ldots & U_{1,m-1}(\sigma) & \colorbox{gray}{$U_{1,m}(\sigma)$}\\
\colorbox{gray}{$U_{21}(\sigma)$} & \colorbox{gray}{$U_{22}(\sigma)$} & \colorbox{gray}{$U_{23}(\sigma)$} & \ldots & U_{2,m-1}(\sigma) & U_{2,m}(\sigma)\\
U_{31}(\sigma) & \colorbox{gray}{$U_{32}(\sigma)$} & \colorbox{gray}{$U_{33}(\sigma)$} & \ddots &  & U_{3,m}(\sigma)\\
\vdots & \vdots & \ddots & \ddots & \ddots & \vdots\\
U_{m-2,1}(\sigma) & U_{m-2,2}(\sigma) & & \ddots & \colorbox{gray}{$U_{m-2,m-1}(\sigma)$} & \colorbox{gray}{$U_{m-2,m}(\sigma)$}\\
U_{m-1,1}(\sigma) & U_{m-1,2}(\sigma) & U_{m-1,3}(\sigma) & \ldots & \colorbox{gray}{$U_{m-1,m-1}(\sigma)$} & \colorbox{gray}{$U_{m-1,m}(\sigma)$}\\
\colorbox{gray}{$U_{m,1}(\sigma)$} & U_{m,2}(\sigma) & U_{m,3}(\sigma) & \ldots & \colorbox{gray}{$U_{m,m-1}(\sigma)$} & \colorbox{gray}{$U_{m,m}(\sigma)$}
\end{array}
\right)
\]

The gray marked entries $U_{jk}$ with $|j-k|\leq 1$ (modulo $m$) correspond to voltage mea\-sure\-ments on current-driven electrodes. In practice, these measurements are usually considered erroneous since they tend to be affected by contact impedances that are not considered in the model and require problematic simulateneous measurement of voltage and current. 

Note that the measurement matrix contains some redundancy. Using \req{shunt1}--\req{shunt4} one can show that 
\[
U_{jk}(\sigma)=\int_\Omega \sigma  \nabla u_{\sigma}^{(j)} \cdot \nabla u_{\sigma}^{(k)}\dx = U_{kj}(\sigma),
\]
so that $U(\sigma)\in \R^{m\times m}$ is symmetric. Moreover, it immediately follows from \req{Def_Ujk} that the 
entries of each column in $U(\sigma)$ sum to zero. Hence, without voltage measurements on current-driven electrodes, 
essentially $m$ entries are missing in $U(\sigma)$. 



\subsection{Linearized reconstruction methods for difference EIT}\label{subsect:linearized}

In difference EIT, one compares two voltage measurements $U(\sigma)$, $U(\sigma_0)$ 
to reconstruct the conductivity change $\sigma-\sigma_0$ with respect to some reference conductivity distribution $\sigma_0$. In a standard linearized approach, one obtains an approximation $\kappa$ to the conductivity change by solving (a regularized version of) the linearized equation
\begin{equation}\labeq{EIT_linearized_Frechet}
U'(\sigma_0)\kappa = V
\end{equation}
where $V:=U(\sigma)-U(\sigma_0)$ is the difference of the measured data, and $U'(\sigma_0):\ L^\infty(\Omega)\to \R^{m\times m}$ is the Fr\'echet derivative of the voltage measurements,
\[
U'(\sigma_0):\ \kappa \mapsto \left( -\int_{\Omega} \kappa \nabla u_{\sigma_0}^{(j)} \cdot \nabla u_{\sigma_0}^{(k)} \dx \right)_{j,k=1,\ldots,m}\in \R^{m\times m},
\]
where $u_{\sigma_0}^{(j)}$ solves \req{shunt1}--\req{shunt4} with reference conductivity $\sigma_0$ and electric current driven through the $j$-th and $(j+1)$-th electrode.

We discretize the imaging domain $\overline{\Omega}=\bigcup_{i=1}^r P_i$ into $r$ disjoint pixels $P_i$ and assume that $\kappa$ is piecewise constant with respect to that partition, i.e.,
\[
\kappa(x)=\sum_{i=1}^r \kappa_i \chi_{P_i}(x).
\]
 Then equation \req{EIT_linearized_Frechet} becomes
\begin{equation}\labeq{EIT_linearized_S}
S \kappa = \boldsymbol{V},
\end{equation}
where $\kappa\in \R^r$ is a column vector containing the entries $\kappa_i$, $i=1,\ldots,r$,
and $\boldsymbol{V}\in \R^{m^2}$ is obtained by writing the matrix $V\in \R^{m\times m}$ as a long column vector,
i.e.,
\[
\boldsymbol{V}_{(j-1)m+k}=V_{jk}, \quad j,k=1,\ldots,m.
\]
$S\in \R^{m^2\times r}$ is the so-called sensitivity matrix, its entries are given by
\[
S_{(j-1)m+k, i}:=-\int_{P_i} \nabla u_{\sigma_0}^{(j)} \cdot \nabla u_{\sigma_0}^{(k)} \dx,
\quad j,k=1,\ldots,m.
\]

In this simple, yet well-established approach, the matrix structure of the mea\-sure\-ments is completely ignored. As a consequence, erroneous voltage measurements on current-driven electrodes
can simply be deleted from the linear system \req{EIT_linearized_S} by removing both, the corresponding lines in the sensitivity matrix, and the problematic entries in the right hand side of \req{EIT_linearized_S}. Thus, one replaces \req{EIT_linearized_S} by the reduced system
\begin{equation}\labeq{EIT_linearized_S_reduced}
\mathbbm{S} \kappa = \mathbbm{V},
\end{equation}
where $\mathbbm{V}\in \R^{m(m-3)}$
denotes the difference data vector without the voltage measurements on current-driven electrodes, and $\mathbbm{S}\in \R^{m(m-3)\times r}$ denotes the reduced sensitivity matrix, in which the lines corresponding to the problematic measurements are removed.

\subsection{Matrix-based methods in EIT}\label{subsec:matrix_methods}

Several new rigorously justified reconstruction approaches in EIT are based on utilizing the matrix structure of EIT measurements, cf.\ the overview in the introduction. In this subsection we summarize a recent linearized monotonicity-based method \cite{harrach2013monotonicity},
that we will also use to numerically test our interpolation methods developed in the next section. 

For the monotonicity method (but not for the interpolation method developed in the next section),
we will restrict ourself to the so-called definite case that 
the conductivity change is either everywhere positive or everywhere negative. 
Without this assumption a more complicated variant of the method would be required, cf.\ \cite{harrach2013monotonicity}.

Let $S_i$ be the $i$-th column of the sensitivity matrix written as a $m\times m$-matrix, i.e.,
\begin{equation}\labeq{sensitivity_element_wise}
S_i:=-\left( \begin{array}{c c c} \int_{P_i} \nabla u_{\sigma_0}^{(1)} \cdot \nabla u_{\sigma_0}^{(1)} \dx &
\dots & \int_{P_i} \nabla u_{\sigma_0}^{(1)} \cdot \nabla u_{\sigma_0}^{(m)} \dx\\
\vdots & & \vdots\\
\int_{P_i} \nabla u_{\sigma_0}^{(m)} \cdot \nabla u_{\sigma_0}^{(1)} \dx &
\dots & \int_{P_i} \nabla u_{\sigma_0}^{(m)} \cdot \nabla u_{\sigma_0}^{(m)} \dx
\end{array}\right).
\end{equation}

When regarded as matrices and partially ordered with respect to matrix
def\-i\-nite\-ness, the EIT measurements depend on the conductivity in a monotonous way.
For all vectors $g=(g_k)_{k=1}^m\in \R^m$,
\begin{eqnarray}
\int_\Omega \frac{\sigma_0}{\sigma}(\sigma_0-\sigma) \left| \sum_{j=1}^m g_j \nabla u_{\sigma_0}^{(j)}\right|^2 \dx
\nonumber \geq g^T \left( U(\sigma)-U(\sigma_0) \right) g= g^T V g\\ 
\labeq{monotonicity} \geq \int_\Omega (\sigma_0-\sigma) \left| \sum_{j=1}^m g_j \nabla u_{\sigma_0}^{(j)} \right|^2 \dx.
\end{eqnarray}
For the shunt model considered herein, the monotonicity relation \req{monotonicity} is proven in \cite[lemma~3.1]{harrach2015combining}, see also \cite{kang1997inverse,ikehata1998size} for the origin of this inequality. Monotonicity relations similar to \req{monotonicity} have also been used
to simplify the theoretical basis of the Factorization Method \cite{harrach2013recent,arnold2013unique},
to prove theoretical uniqueness results, cf. \cite{gebauer2008localized,harrach2009uniqueness,harrach2012simultaneous,harrachlocal}, and even to derive new hybrid tomography methods \cite{harrach2015combining}.

It follows from \req{monotonicity} that, for all $\beta\geq 0$,
\begin{eqnarray}
\labeq{element_monotonicity1} \beta \chi_{P_i} \leq \sigma_0 - \sigma \quad & \mbox{ implies } \quad  V\geq -\beta S_i, \quad \mbox{ and }\\
\labeq{element_monotonicity2} \beta \chi_{P_i} \leq \frac{\sigma_0}{\sigma}(\sigma - \sigma_0) 
\quad & \mbox{ implies } \quad 
V\leq \beta S_i,
\end{eqnarray}
where the inequalities on the left hand sides of \req{element_monotonicity1}--\req{element_monotonicity2} are to be understood point-wise almost everywhere, and the inequalities on the right hand sides have to be interpreted in terms of matrix definiteness, i.e. $V\geq -\beta S_i$ 
means that the matrix $V+\beta S_i\in \R^{m\times m}$ possesses only non-negative eigenvalues.

The monotonicity inequality \req{monotonicity} also yields that 
\[
|V|=V\geq 0 \mbox{ for $\sigma\leq \sigma_0$,} \quad \mbox{ and } \quad -|V|=V\leq 0 \mbox{ for $\sigma_0\leq \sigma$,}
\]
where $|V|$ denotes the matrix absolute value. For each pixel $P_i$, we define
\begin{equation}\labeq{beta_montonicity}
\beta_i:=\max \{\beta\geq 0:\  \beta S_i\geq -|V| \}.
\end{equation}
If the conductivity $\sigma$ is either everywhere lower than the reference conductivity $\sigma_0$ (i.e., $\sigma\leq \sigma_0$),
or everywhere larger (i.e., $\sigma\geq \sigma_0$), then \req{element_monotonicity1}--\req{element_monotonicity2} yield that for each pixel,
\[
\inf_{x\in P_i} |\sigma(x)- \sigma_0(x)|>0 \quad \mbox{ implies } \quad \beta_i>0.
\]
Hence, the support of the function
\begin{equation}\labeq{monotonicity_indicator}
x\mapsto \sum_{i=1}^r \beta_i \chi_{P_i}(x) 
\end{equation}
will contain the support of the conductivity change $|\sigma-\sigma_0|$ up to the pixel partition.

Moreover, one can show that, in the limit of infinitely many electrodes, when the measurements are given by the Neumann-Dirichlet-operators, $\beta_i=0$ for each pixel that lies outside the outer support of $|\sigma_0-\sigma|$, see \cite{harrach2013monotonicity}. 
Hence, in the limit of noiseless data on infinitely many electrodes, a plot of \req{monotonicity_indicator} will show where the conductivity $\sigma$
differs from its reference state $\sigma_0$. Let us stress that (even though linearized monotonicity tests are used), the non-linear shape reconstruction problem is being solved, cf.\ \cite{harrach2013monotonicity}. 
This is in accordance with the fact that shape information is in some sense unaffected by linearization errors, cf.\ \cite{harrach2010exact}.

Montonicity-based methods require the full matrix of measurements. The def\-i\-nite\-ness tests in 
\req{beta_montonicity} do not seem possible without knowledge of the main diagonals of the 
measurement matrices, which contain the voltages on current-driven electrodes. In the next section
we will develop a method that uses the geometry-specific smoothness of difference measurements in order
to interpolate the missing voltages from the measurements on current-free electrodes.

\section{Interpolation of missing electrode data in EIT}
\label{section:main}
Though voltage measurements on current driven electrodes are usually considered erroneous, standard EIT systems such as the 32-channel Swisstom pioneer EIT system nevertheless allow the user to access these measurements. Hence, the simplest way to implement a method that relies on the measurements matrix structure is to simply ignore the problem and use the full measurement matrix as given by the EIT system. 
This approach has been taken for phantom experiment data in \cite{Zhou2015Monotonicity_preprint}, 
and showed that the use of monotonicity-based constraints improves the reconstructions compared to a standard linearized approach \req{EIT_linearized_S_reduced}. In the following, we 
will study how to interpolate the voltages on current-driven electrodes from the measurements on current-free electrodes.

\subsection{Motivation for interpolating missing data}\label{subsect:motivation_interpolation}

In many applications, small electrodes are used, the conductivity is assumed to be homogeneous
in a neighbourhood of the boundary, and the conductivity change $\sigma-\sigma_0$ is assumed to occur  with some distance to the boundary. In that case, the difference of the resulting potentials $v:=u_{\sigma}-u_{\sigma_0}$ solves
\begin{eqnarray}
\labeq{shunt1_diff} \nabla \cdot (\sigma_0 \nabla v) = - \nabla \cdot (\sigma-\sigma_0) \nabla u_\sigma \quad  \mbox{ in } \Omega,\\
\labeq{shunt2_diff} \int_{\mathcal E_l} \sigma \partial_\nu v \dx[s] =0 \quad \mbox{ for } l=1,\ldots,m,\\
\labeq{shunt3_diff} \sigma \partial_\nu v =0 \quad  \mbox{ on } \partial \Omega\setminus \bigcup_{l=1}^m \mathcal E_l,\\
\labeq{shunt4_diff} v|_{\mathcal E_l}=\mathrm{const.} \quad \forall l=1,\ldots,m,
\end{eqnarray}
If $\sigma_0$ is constant in a neighborhood of the boundary, then, 
 in the limit of point electrodes (cf.\ \cite{hanke2011justification}), $v$ solves
\[
\Delta v=0  \quad \mbox{ in a neighborhood of $\partial \Omega$}
\]
with homogeneous Neumann boundary data $\partial_\nu v|_{\partial \Omega}=0$.
Hence, by standard elliptic regularity results, we can expect the voltage difference to be a very smooth function on $\partial \Omega$. This justifies to interpolate the missing or erroneous values on current-driven electrodes by the neighboring values on current-free electrodes.

\subsection{Linear interpolation}\label{subsect:linear_interpolant}

We will first describe a simple linear interpolation approach that we will use as a benchmark for the more sophisticated approach described below.
In a situation where the $j$-th electrode is positioned between and spatially close to the $(j-1)$-th and the $(j+1)$-th electrode,
we may expect that the voltage difference $V_{j,j}$ should be close to the arithmetic mean of the values for the two neighboring electrode pairs, $V_{j,j-1}$ and $V_{j,j+1}$. Together with the fact that the measurement matrix should be symmetric and columns should sum up to zero, 
we thus obtain the following linear interpolation strategy.

\begin{definition}\label{def_interpolant}
Given a symmetric EIT difference measurement matrix $V\in \R^{m\times m}$ (with possibly erroneous entries $V_{jk}$ for $|j-k|\leq 1$), we 
define the 
\emph{linearly interpolated measurement matrix} $V^{\mathrm{lin}}\in \R^{m\times m}$
by solving the following equation system
\begin{eqnarray*}
V^{\mathrm{lin}}_{jk}=V_{jk} & \mbox{ for $|j-k|\geq 1$ (modulo $m$),}\\
\sum_{j=1}^m V_{jk}^{\mathrm{lin}}=0 \quad & \mbox{ for } k=1,\ldots,m,\\
V^{\mathrm{lin}}_{j,j-1}=V^{\mathrm{lin}}_{j-1,j}  \quad & \mbox{ for } j=1,\ldots,m,\\
V^{\mathrm{lin}}_{j,j}= \frac{1}{2}( V^{\mathrm{lin}}_{j-1,j} + V^{\mathrm{lin}}_{j,j+1} )
 \quad & \mbox{ for } j=1,\ldots,m.\\
\end{eqnarray*}
The linear equations are uniquely solvable for odd $m\in \N$. In the case of 
even $m\in \N$, the equations can be solved using the pseudo inverse.
\end{definition}
%
\kommentar{
\begin{remark}

The linearly interpolated measurement matrix is easily calculated by solving 
\[\fl
\left(
\begin{array}{c c c c c c c c c}
1 & -\frac{1}{2} & 0 & 0 & 0 & 0 & \cdots & 0 & -\frac{1}{2}\\
1 & 1  & 0 & 0 & 0 & 0 & \cdots & 0 & 1\\
0& -\frac{1}{2} & 1 & -\frac{1}{2} & 0 & 0 & \cdots & 0 & 0\\
0 & 1 & 1 & 1 & 0 & 0 & \cdots & 0 & 0\\
0& 0 & 0 & -\frac{1}{2} & 1 & -\frac{1}{2} & \cdots & 0 & 0\\
0 & 0 & 0 & 1 & 1 & 1 & \cdots & 0 & 0\\
\vdots & \vdots & \vdots & \vdots & \vdots & \vdots & \ddots & \vdots & \vdots\\
0 & 0 & 0 & 0 & 0 & 0 & \cdots & 1 & -\frac{1}{2}\\
0 & 0 & 0 & 0 & 0 & 0 & \cdots & 1 & 1
\end{array}
\right)
\left(\begin{array}{c}
V^{\mathrm{lin}}_{1,1}\\ V^{\mathrm{lin}}_{2,1}\\ V^{\mathrm{lin}}_{2,2}\\ V^{\mathrm{lin}}_{3,2}\\
V^{\mathrm{lin}}_{3,3}\\ V^{\mathrm{lin}}_{4,3}\\ \vdots \\ V^{\mathrm{lin}}_{m,m}\\ V^{\mathrm{lin}}_{1,m}
\end{array}\right)
=
\left(\begin{array}{c} 
0\\ 
-\sum_{j\not\in \{m,1,2\}}  V_{j,1}\\ 
0 \\
-\sum_{j\not\in \{1,2,3\}}  V_{j,2}\\
0\\
-\sum_{j\not\in \{2,3,4\}}  V_{j,3}\\
\vdots\\
0\\
-\sum_{j\not\in \{m-1,m,1\}}  V_{j,m}\\
\end{array}\right)
\]
and setting $V^{\mathrm{lin}}_{j,j+1}:=V^{\mathrm{lin}}_{j+1,j}$ for $j=1,\ldots,m$,
and $V^{\mathrm{lin}}_{jk}:=V_{jk}$ for $|j-k|\geq 1$.
The linear system in the above definition is easily seen to be equivalent to solving
\[\fl
\left(
\begin{array}{c c c c c c c c c}
 1  & 0  & 0 & \cdots & 0 & 1\\
 1  & 1 & 0 & \cdots & 0 & 0\\
0  & 1 & 1 & \cdots & 0 & 0\\
 \vdots  & \vdots & \vdots & \ddots & \vdots & \vdots\\
  0  & 0 & 0 & \cdots & 1 & 1
\end{array}
\right)
\left(\begin{array}{c}
V^{\mathrm{lin}}_{2,1}\\  V^{\mathrm{lin}}_{3,2}\\
V^{\mathrm{lin}}_{4,3}\\ \vdots \\ V^{\mathrm{lin}}_{m,m-1}\\ V^{\mathrm{lin}}_{1,m}
\end{array}\right)
=
\left(\begin{array}{c} 
-\frac{2}{3} \sum_{j\not\in \{m,1,2\}}  V_{j,1}\\ 
-\frac{2}{3} \sum_{j\not\in \{1,2,3\}}  V_{j,2}\\
-\frac{2}{3} \sum_{j\not\in \{2,3,4\}}  V_{j,3}\\
\vdots\\
-\frac{2}{3} \sum_{j\not\in \{m-1,m,1\}}  V_{j,m}\\
\end{array}\right)
\]
and setting 
\begin{eqnarray*}
V^{\mathrm{lin}}_{jk}:=V_{jk} & \mbox{ for $|j-k|\geq 1$ (modulo $m$),}\\
V^{\mathrm{lin}}_{j,j+1}:=V^{\mathrm{lin}}_{j+1,j}  \quad & \mbox{ for } j=1,\ldots,m,\\
V^{\mathrm{lin}}_{j,j}:= \frac{1}{2}( V^{\mathrm{lin}}_{j-1,j} + V^{\mathrm{lin}}_{j,j+1} )
 \quad & \mbox{ for } j=1,\ldots,m.
\end{eqnarray*}
Using Laplace's formula the above matrix has determinant 2 for odd $m$ and determinant 0 for even $m$.
\end{remark}
}
%
%
%
%
%
%
\subsection{Interpolation using geometry-specific smoothness}\label{subsect:interpolant_geometry}

Our numerical examples in the next section show that the linearly interpolated measurements converge against the true 
measurements. However, this convergence is rather slow. Also, if the electrodes are not positioned in a single circle around an imaging subject,
the above linear interpolation method seems questionable and a more geometry-specific method should be used.

Our new interpolation approach is based on a more careful study of the arguments in subsection \ref{subsect:motivation_interpolation}. 
The voltage difference $v=u_\sigma-u_{\sigma_0}$ solves the elliptic equation \req{shunt1_diff} with a source term on the support of the conductivity change $\sigma-\sigma_0$. The smoothness of $v|_{\partial \Omega}$ (and hence of the voltage measurements)
will depend on how far the support of $\sigma-\sigma_0$ is located away from the boundary $\partial \Omega$. We will make use of this geometry-specific smoothness
by assuming that we know an upper bound of the support, and demand that the interpolated measurements are consistent with a source term on this upper bound.

To make this idea precise, assume that we know a closed set $B$ (with non-empty interior) so that
\[
\supp(\sigma-\sigma_0)\subseteq B \subseteq \Omega.
\]
The $j$-th column in the matrix $V\in \R^{m\times m}$ contains the
difference measurements
\[
V_{jk}=v^{(k)}|_{\mathcal E_j}-v^{(k)}|_{\mathcal E_{j+1}},
\]
where $v^{(k)}$ solves
\begin{equation}\labeq{source_F}
\nabla \cdot (\sigma_0 \nabla v^{(k)}) = \nabla \cdot F^{(k)}, 
\end{equation}
with $F^{(k)}:=-(\sigma-\sigma_0) \nabla u_\sigma^{(k)}\in L^2(B)^n$, and the boundary conditions \req{shunt2_diff}--\req{shunt4_diff}.

Our new interpolation technique aims to fill in the missing data in such a way that \req{source_F} is fulfilled with the smallest possible source term $F^{(j)}\in L^2(B)$. Together with the symmetry and zero column sum condition, 
we thus obtain the following geometry-specific interpolation strategy.

\begin{definition}\label{def:main}
Let $V\in \R^{m\times m}$ be a symmetric EIT difference measurement matrix (with possibly erroneous entries $V_{jk}$ for $|j-k|\leq 1$).
We say that $F^{(1)},\ldots,F^{(m)}\in L^2(B)^n$ are \emph{minimal measurement-consistent source terms} if they 
minimize 
\[
\sum_{j=1}^m \norm{F^{(j)}}^2_{L^2(B)^n} \to \mbox{min!} 
\]
under the constraint that the measurements constructed from these sources fulfill the interpolation, symmetry and zero column sum condition, i.e.
with respective solutions $v^{(k)}$ of $\nabla \cdot (\sigma_0 \nabla v^{(k)}) = \nabla \cdot F^{(k)}$, \req{shunt2_diff}--\req{shunt4_diff},
\[
V^{\mathrm{geom}}_{jk}:=v^{(k)}|_{\mathcal E_j}-v^{(k)}|_{\mathcal E_{j+1}},
\]
must fulfill
\begin{eqnarray}
\labeq{geom_interp1} V^{\mathrm{geom}}_{jk}=V_{jk} & \mbox{ for $|j-k|\geq 1$ (modulo $m$),}\\
\labeq{geom_interp2} \sum_{j=1}^m V_{jk}^{\mathrm{geom}}=0 \quad & \mbox{ for } k=1,\ldots,m,\\
\labeq{geom_interp3} V^{\mathrm{geom}}_{j,j-1}=V^{\mathrm{geom}}_{j-1,j}  \quad & \mbox{ for } j=1,\ldots,m.
\end{eqnarray}

The matrix $V^{\mathrm{geom}}$ constructed from measurement-consistent source terms
is called the \emph{geometrically interpolated measurement matrix}.
\end{definition}

\begin{remark}
Definition \ref{def:main} can be interpreted as a minimum norm interpolation in problem-specific abstract smoothness classes. Let us introduce 
\[
\tilde K_B:\ L^2(B)^n\to L^2(\partial \Omega),\quad F\mapsto  f:=v|_{\partial \Omega},
\]
where $v\in H^1(\Omega)$ solves the source problem
\begin{equation}\labeq{rem_smoothness_source}
\nabla \cdot (\sigma_0 \nabla v) = \nabla \cdot F, 
\end{equation}
with boundary conditions \req{shunt2_diff}--\req{shunt4_diff}. Applying $\tilde K_B$ entrywise to a $m$-tuple of functions in $L^2(B)^n$ we obtain the operator
\[
K_B:\ (L^2(B)^n)^m\to L^2(\partial \Omega)^m,\quad \left( F^{(k)}\right)_{k=1}^m\mapsto \left( \tilde K_B (F^{(k)})\right)_{k=1}^m,
\]
and define the space
\[
H_B:=\mathcal{R}(K_B)\subseteq L^2(\partial \Omega)^m.
\]
with the range space norm
\[
\norm{\left( f^{(k)} \right)_{k=1}^m}_{H_B}:=\inf \{ \norm{\left( F^{(k)}\right)_{k=1}^m}_{(L^2(B)^n)^m}:\  
f^{(k)}=\tilde K_B (F^{(k)})\ \forall k \}.
\]

For two open sets $B_1$, $B_2$, with $\overline B_1\subseteq B_2$, a standard mollification argument shows
that every solution of \req{rem_smoothness_source} with source term $F\in L^2(B_1)^n$ also solves
\req{rem_smoothness_source} with a source term in $H^1(B_2)^n$. Since $H^1(B_2)^n$ is compactly embedded in
$L^2(B_2)^n$, it follows that the range space $H_{B_1}$ is compactly embedded in $H_{B_2}$.
Hence, analogously to the use of source conditions in regularization theory, 
the spaces $H_B$ can be interpreted as abstract smoothness classes which formalize the intuitive fact
that smaller sources generate smoother boundary potentials.

In that formalism, the geometrically interpolated measurement matrix
in definition \ref{def:main} is obtained from evaluating the minimum norm interpolant in $H_B$ under
the additional constraint of a symmetry and zero column sum condition.
\end{remark}

We will now show that the geometrically interpolated matrix is unique and that it can be obtained in an easy and computationally cheap
way. To that end, we assume that $\sigma_0$ fulfills a unique continuation property (UCP), which means that every
solution of $\nabla \cdot (\sigma_0 \nabla u_0) = 0$ that is constant on an open subset of $\Omega$ must be constant on all of $\Omega$.
Also, we assume that $B$ is chosen consistent with the pixel partition, i.e.,
\[
B=\bigcup_{P_i\subseteq B} P_i.
\]
As in subsection \ref{subsec:matrix_methods}, $S_i\in \R^{m\times m}$ denotes the sensitivity matrix for the $i$-th pixel, i.e.\ the $i$-th column of the sensitivity matrix written as a $m\times m$-matrix, cf.~\req{sensitivity_element_wise}. Summing up all elements in $B$ we define
\[
S_B=\sum_{i:\ P_i\subseteq B} S_i\in \R^{m\times m}.
\]
$S_B^+\in \R^{m\times m}$ denotes the (Moore-Penrose-)pseudoinverse of $S_B$. $e_j\in \R^m$ is the $j$-th unit vector.


\begin{theorem}\label{thm:main}
There exists a unique geometrically interpolated measurement matrix $V^{\mathrm{geom}}\in \R^{m\times m}$. 
It is the unique minimizer of
\[
-\sum_{j=1}^m e_j^T (V^{\mathrm{geom}})^T S_B^+ V^{\mathrm{geom}} e_j \to \mathrm{min!}
\]
under the constraints \req{geom_interp1}--\req{geom_interp3}.
\end{theorem}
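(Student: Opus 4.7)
The strategy is to identify the source-to-measurement map and its adjoint in terms of the sensitivity matrix, then rewrite the minimization as a finite-dimensional strictly convex problem in $V^{\mathrm{geom}}$ alone. Introduce the bounded linear operator
\[
T_B : L^2(B)^n \to \R^m, \qquad F \mapsto \bigl(v|_{\mathcal E_j} - v|_{\mathcal E_{j+1}}\bigr)_{j=1}^m,
\]
where $v \in H^1(\Omega)$ solves \req{source_F} with boundary conditions \req{shunt2_diff}--\req{shunt4_diff}. Each candidate column then equals $V^{\mathrm{geom}} e_k = T_B F^{(k)}$ and the objective of Definition~\ref{def:main} reads $\sum_k \norm{F^{(k)}}^2_{L^2(B)^n}$. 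I would first compute the adjoint by testing the source equation against the superposed reference potential $u^{(g)} := \sum_j g_j u_{\sigma_0}^{(j)}$, which is $\sigma_0$-harmonic, constant on every electrode, and has boundary current pattern $(g_l - g_{l-1})_l$. A careful integration by parts combining the shunt conditions on $v$ with the electrode/insulation conditions on $u^{(g)}$ collapses every boundary term to
\[
\langle T_B F, g\rangle_{\R^m} = \int_B F \cdot \nabla u^{(g)} \dx,
\]
so that $T_B^* g = \chi_B \sum_j g_j \nabla u_{\sigma_0}^{(j)}$. Comparing with \req{sensitivity_element_wise} then yields the pivotal identity $T_B T_B^* = -S_B$, the algebraic bridge to the matrix objective of the theorem.

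Next I would use the unique continuation property to pin down $\range(T_B)$. The representation $g^T S_B g = -\int_B |\sum_j g_j \nabla u_{\sigma_0}^{(j)}|^2 \dx$ shows that $S_B g = 0$ forces $u^{(g)}$ to be constant on the open set $B$, hence on all of $\Omega$ by UCP, and therefore $g_l - g_{l-1} = 0$ for every $l$, i.e.\ $g \in \mathrm{span}(\mathbf 1)$. Thus $\ker S_B = \mathrm{span}(\mathbf 1)$ and $\range(T_B) = \range(-S_B) = \mathbf 1^\perp$, which is exactly the zero column-sum condition \req{geom_interp2}. For any admissible column $g$, standard pseudoinverse theory gives the unique minimum-norm preimage $F = -T_B^* S_B^+ g$ with $\norm{F}^2 = -g^T S_B^+ g$ (using $(-A)^+ = -A^+$). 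Summing over $k = 1, \ldots, m$ reduces Definition~\ref{def:main} to minimizing $-\sum_j e_j^T (V^{\mathrm{geom}})^T S_B^+ V^{\mathrm{geom}} e_j$ over the affine set $\mathcal A$ cut out by \req{geom_interp1}--\req{geom_interp3}; the optimal source terms are recovered uniquely from the optimal $V^{\mathrm{geom}}$.

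The final step is to show this reduced finite-dimensional quadratic program has a unique solution. Feasibility of $\mathcal A$ is an elementary dimension count: the $2m$ free tridiagonal parameters easily accommodate the $m$ zero column-sum relations after imposing symmetry. The quadratic form $W \mapsto \sum_j (We_j)^T(-S_B^+)(We_j)$ is positive semi-definite since $-S_B^+$ is, with null space $\{W : We_k \in \mathrm{span}(\mathbf 1) \text{ for every } k\}$. Strict convexity on $\mathcal A$ thus reduces to checking that the linear part of $\mathcal A$ meets this null space only at $0$: any such $W$ would have each column both tridiagonally supported (from the linear part of \req{geom_interp1}) and proportional to $\mathbf 1$, which for $m \geq 4$ forces $W = 0$. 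The functional is therefore strictly convex and coercive on the closed affine set $\mathcal A$, producing a unique minimizer.

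The main technical hurdle is the adjoint computation in the first step: to make every boundary term vanish one has to simultaneously exploit the shunt conditions on $v$ (electrode-free insulation, constant potential and zero net current on each electrode) together with the matching electrode/insulation conditions on $u^{(g)}$. Once the clean identity $T_B T_B^* = -S_B$ is in hand, the rest of the argument is standard Hilbert-space pseudoinverse theory, a UCP application already familiar from EIT uniqueness results, and a short verification in linear algebra.
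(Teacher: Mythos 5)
Your proposal is correct and follows essentially the same route as the paper: the operator $T_B$ is the paper's virtual measurement operator $L$, and your three key facts (the adjoint formula, $T_BT_B^*=-S_B$, and the UCP-based identification of the range with the zero-column-sum space) are exactly the paper's lemma~\ref{lemma:L}, after which both arguments reduce the problem via the pseudoinverse to the stated quadratic program. The only cosmetic difference is in the last step, where the paper establishes uniqueness through the coercivity bound $-v^TS_B^+v\geq \norm{v}_2^2/\norm{L}^2$ on $\range(L)$ rather than your null-space intersection argument; both rest on the positive definiteness of $-S_B^+$ on $\mathbf{1}^\perp$.
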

Theorem \ref{thm:main} is proven in the next subsection.

\begin{remark}\label{remark:main}
The minimization problem in theorem~\ref{thm:main} can be solved analytically. 
Let $V\in \R^{m\times m}$ be a symmetric EIT difference measurement matrix  (with possibly erroneous entries $V_{jk}$ for $|j-k|\leq 1$).

$V^{\mathrm{geom}}\in \R^{m\times m}$ fulfills the constraints \req{geom_interp1}--\req{geom_interp3}
if and only if $V^{\mathrm{geom}}_{jk}=V_{jk}$ for $|j-k|>1$, and there exists a vector $w\in \R^m$ with
\begin{equation}\labeq{V_geom_and_w}
V^{\mathrm{geom}}_{j-1,j}=w_j,\quad V^{\mathrm{geom}}_{j+1,j}=w_{j+1},\quad V^{\mathrm{geom}}_{j,j}=-\sum_{l:\ |l-j|>1} V_{jl}-w_j-w_{j+1}.
\end{equation}
Hence, we can write 
\[
V^{\mathrm{geom}} e_j=A^{(j)} w + b^{(j)}
\]
where $A^{(j)}\in \R^{m\times m}$ and $b^{(j)}\in \R^m$ are given by
\begin{eqnarray*}
A^{(j)}_{kl}=\delta_{k,j-1}\delta_{l,j}+\delta_{k,j+1}\delta_{l,j+1}-\delta_{k,j}\delta_{l,j}-\delta_{k,j}\delta_{l,j+1}\\
%
 b^{(j)}_k=\left\{ \begin{array}{l l} V_{jk} & \mbox{ for } |k-j|>1,\\
 0 & \mbox{ for } |k-j|=1,\\
 -\sum_{l:\ |l-j|>1} V_{jl}  & \mbox{ for } k=j.
 \end{array}\right.
\end{eqnarray*}

Thus, with $A:=-\sum_{j=1}^m (A^{(j)})^T S_B^+ A^{(j)}$ and $b:=-\sum_{j=1}^m (A^{(j)})^T S_B^+ b^{(j)}$, the minimization problem is equivalent to minimizing
\[
\frac{1}{2}w^T A w + b^T w \to \mbox{min!}
\]
for which the unique solution is given by $w=-A^{-1}b$. The values of $V^{\mathrm{geom}}$
are then determined by \req{geom_interp1} and \req{V_geom_and_w}.
\end{remark}

\begin{remark}
Given $r$ pixels and $m$ electrodes, the computational effort to compute $S_B$ is at most
$rm^2$, and all other computations in remark~\ref{remark:main} are of the order $O(m^3)$.
In a setting where the upper bound $B$ and the sensitivity matrix $S$ stay constant over
several time-steps, a factorization of the matrix $A$ can be precomputed. Thus the interpolation
can be carried out in $O(m^2)$ steps, which is neglectable to the computational cost of reconstructing
the conductivity. Hence, our new interpolation method is well suited for real-time imaging.
\end{remark}

\begin{remark}
Note that our interpolation method uses the sensitivity matrix which also appears in linearized reconstruction
methods. However, the interpolation method does not rely on a linearized approximation and interpolates the values of the true voltage difference (that depends non-linearly on the conductivity change). 
\end{remark}

\subsection{Proof of theorem~\ref{thm:main}}
The rest of this section is devoted to proving theorem~\ref{thm:main}. We first introduce the so-called
virtual measurement operator (cf., e.g., \cite{harrach2013recent,harrach2013monotonicity})
\[
L:\ L^2(B)^n\to \R^m,\quad F\mapsto \left( v|_{\mathcal E_k}-v|_{\mathcal E_{k+1}} \right)_{k=1}^m\in \R^m,
\]
where $v|_{\mathcal E_k}-v|_{\mathcal E_{k+1}}$, $k=1,\ldots,m$, is the measurement on the 
$k$-th electrode pair of the solution of the source problem
\[
\nabla \cdot (\sigma_0 \nabla v) = \nabla \cdot F, 
\]
with boundary conditions \req{shunt2_diff}--\req{shunt4_diff}. The next lemma summarizes some properties of $L$ and its connection to the sensitivity matrix.
\begin{lemma}\label{lemma:L}
\begin{enumerate}[(a)]
\item The adjoint of $L$ is given by
\[
L^*:\ \R^m\to L^2(B)^n, \quad L^* w=\sum_{j=1}^m w_j \nabla u_{\sigma_0}^{(j)}|_{B} \ \mbox{ for } w=(w_j)_{j=1}^m\in \R^m,
\]
where $u_{\sigma_0}^{(j)}$ solves \req{shunt1}--\req{shunt4} with reference conductivity $\sigma_0$ and electric current driven through the $j$-th and $(j+1)$-th electrode.
\item If $B$ is consistent with the pixel partition, i.e., $B=\bigcup_{i:\ P_i\subseteq B} P_i$, then $S_B=-L L^*$.
\item If $\sigma_0$ fulfills a UCP, then $\range(L)=\{ v:\  \sum_{j=1}^m v_j=0 \}\subset \R^m$.
%
\end{enumerate}
\end{lemma}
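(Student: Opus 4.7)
The plan is to handle the three parts in sequence, as each builds on its predecessor.

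For part (a), I would compute the adjoint via Green's identity. Given $F \in L^2(B)^n$ with associated solution $v$ of the source problem, I would test the weak formulation of $\nabla\cdot(\sigma_0\nabla v) = \nabla\cdot F$ against $u_{\sigma_0}^{(k)}$, using that $u_{\sigma_0}^{(k)}$ is admissible as a test function (constant on each electrode) and that the electrode currents of $v$ vanish. The boundary term on $\partial\Omega$ drops out (since $F$ is supported in $B\subset\Omega$, no boundary contribution from $F$; and the current-free condition \req{shunt2_diff} on $v$ kills the other). This yields
\[
v|_{\mathcal E_k}-v|_{\mathcal E_{k+1}} = \int_\Omega \sigma_0 \nabla v\cdot \nabla u_{\sigma_0}^{(k)}\dx = \int_B F\cdot \nabla u_{\sigma_0}^{(k)}\dx,
\]
where the first equality is obtained by testing the equation for $u_{\sigma_0}^{(k)}$ with $v$ and using the boundary data of $u_{\sigma_0}^{(k)}$. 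Summing against $w\in\R^m$ gives the claimed formula for $L^*$.

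For part (b), I would just plug (a) into the definition. By (a),
\[
(LL^*w)_j = \int_B \nabla u_{\sigma_0}^{(j)} \cdot \sum_k w_k \nabla u_{\sigma_0}^{(k)}\dx,
\]
so $(LL^*)_{jk}=\int_B \nabla u_{\sigma_0}^{(j)}\cdot \nabla u_{\sigma_0}^{(k)}\dx$. Since $B$ is a union of pixels, this integral equals $\sum_{i:\,P_i\subseteq B}\int_{P_i}\nabla u_{\sigma_0}^{(j)}\cdot \nabla u_{\sigma_0}^{(k)}\dx$, which by \req{sensitivity_element_wise} equals $-(S_B)_{jk}$.

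For part (c), the inclusion $\range(L)\subseteq \{v:\sum_j v_j=0\}$ is immediate: summing $(LF)_k = v|_{\mathcal E_k}-v|_{\mathcal E_{k+1}}$ cyclically telescopes to zero. Since $L$ has finite-dimensional codomain, its range is closed, so it suffices to show $\range(L)^\perp=\kernel(L^*)$ equals $\mathrm{span}(\mathbf{1})$, the orthogonal complement of $\{v:\sum_j v_j=0\}$. The inclusion $\mathrm{span}(\mathbf{1})\subseteq \kernel(L^*)$ follows because $\sum_j u_{\sigma_0}^{(j)}$ solves \req{shunt1}--\req{shunt4} with all currents $I_l=\sum_j(\delta_{j,l}-\delta_{j+1,l})=0$, hence is constant, so $\sum_j \nabla u_{\sigma_0}^{(j)}=0$. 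Conversely, suppose $w\in\kernel(L^*)$. Then $u:=\sum_j w_j u_{\sigma_0}^{(j)}$ satisfies $\nabla\cdot(\sigma_0\nabla u)=0$ in $\Omega$ and $\nabla u = 0$ on (the nonempty interior of) $B$, so $u$ is constant on an open set; by UCP, $u$ is constant on all of $\Omega$. Evaluating the net current on $\mathcal E_l$ gives $0 = \int_{\mathcal E_l}\sigma_0\partial_\nu u\dx[s]=w_l-w_{l-1}$, so $w$ is a constant vector. The main obstacle here is being careful about the shunt-model weak formulation and verifying that constants are captured correctly by the UCP argument; everything else is routine integration by parts.
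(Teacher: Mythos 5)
Your proposal is correct and follows essentially the same route as the paper: part (a) by Green's identity/the shunt-model weak formulation giving $v|_{\mathcal E_k}-v|_{\mathcal E_{k+1}}=\int_B F\cdot\nabla u_{\sigma_0}^{(k)}\dx$, part (b) by direct computation of $\int_B(L^*e_j)\cdot(L^*e_k)\dx$ over the pixels of $B$, and part (c) by identifying $\kernel(L^*)$ with the constant vectors via the UCP and the electrode currents, then using $\kernel(L^*)^\perp=\range(L)$. Your slightly more explicit handling of the two inclusions in (c) (and the correct index $w_l-w_{l-1}$ for the net current) only adds detail to the same argument.
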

\begin{proof}
\begin{enumerate}[(a)]
\item For the $j$-th unit vector $e_j\in \R^m$ and $F\in L^2(B)^n$ we have that
\begin{eqnarray*}
\int_B F\cdot (L^* e_j) \dx = (LF)\cdot e_j= v|_{\mathcal E_j}-v|_{\mathcal E_{j+1}}=\int_{\partial \Omega} \sigma_0 \partial_\nu u_{\sigma_0}^{(j)}|_{\partial \Omega}\,  v|_{\partial \Omega} \dx[s]\\
= \int_B F \cdot \nabla u_{\sigma_0}^{(j)} \dx.
\end{eqnarray*}
\item For the $j$-th and $k$-th unit vector $e_j,e_k\in \R^m$, we have that
\begin{eqnarray*}
\int_B (L^* e_j)\cdot (L^* e_k) \dx = \int_B \nabla u_{\sigma_0}^{(j)}\cdot \nabla u_{\sigma_0}^{(k)}\dx\\
= \sum_{i:\ P_i\subseteq B} \int_{P_i} \nabla u_{\sigma_0}^{(j)}\cdot \nabla u_{\sigma_0}^{(k)}\dx = 
- \sum_{i:\ P_i\subseteq B} e_j^T S_i e_k^T =-e_j^T S_B e_k.
\end{eqnarray*}
\item If $\sigma_0$ fulfills a UCP, then
\[
0=L^* w=\sum_{j=1}^m w_j \nabla u_{\sigma_0}^{(j)}|_{B}
\]
is equivalent to $\sum_{j=1}^m w_j u_{\sigma_0}^{(j)}=\mathrm{const.}$ on all of $\Omega$. 

Due to the definition of $u_{\sigma_0}^{(j)}$, we have that for each $l=1,\ldots,m$,
\begin{eqnarray*}
\int_{\mathcal E_l} \sigma_0 \partial_\nu \sum_{j=1}^m w_j u_{\sigma_0}^{(j)} \dx[s]
= \sum_{j=1}^m w_j \int_{\mathcal E_l} \sigma_0 \partial_\nu u_{\sigma_0}^{(j)} \dx[s]
=\sum_{j=1}^m w_j (\delta_{j,l}-\delta_{j+1,l})\\
= w_l-w_{l+1}
\end{eqnarray*}
Hence, $\sum_{j=1}^m w_j u_{\sigma_0}^{(j)}=\mathrm{const.}$ is equivalent to $w_1=\ldots=w_m=\mathrm{const.}$, so that
the assertion follows from $\kernel(L^*)^\perp=\range(L)$.
%
\end{enumerate}
\end{proof}


\textit{Proof of theorem~\ref{thm:main}.}
Let $V\in \R^{n\times n}$ be symmetric difference EIT measurements (with possibly erroneous entries $V_{jk}$ for $|j-k|\leq 1$). 
Due to lemma~\ref{lemma:L}(c), the minimization problem in definition \ref{def:main} is equivalent to
finding $V^{\mathrm{geom}}\in \R^{m\times m}$ that minimizes
\[
\sum_{j=1}^m \norm{L^+ V^{\mathrm{geom}} e_j}_{L^2(B)^n}^2\to \mbox{min!}
\]
under the constraints \req{geom_interp1}--\req{geom_interp3}. 

From lemma~\ref{lemma:L}(b) it follows that
$(L^+)^* L^+=(LL^*)^+=-S_B^+$, so that
\begin{eqnarray*}
\norm{L^+ V^{\mathrm{geom}} e_j}^2_{L^2(B)^n}= -e_j^T (V^{\mathrm{geom}})^T S_B^+ V^{\mathrm{geom}} e_j.
\end{eqnarray*}
Finally, for all $v\in \range(L)$ we have that
\[
-v^T S_B^+ v = \norm{L^+ v}^2_{L^2(B)^n}\geq \frac{1}{\norm{L}^2} \norm{L L^+ v}_2^2
= \frac{1}{\norm{L}^2} \norm{v}^2_2,
\]
so that 
\[
-\sum_{j=1}^m e_j^T (V^{\mathrm{geom}})^T S_B^+ V^{\mathrm{geom}} e_j \geq \frac{1}{\norm{L}^2} \sum_{j=1}^m \norm{V^{\mathrm{geom}} e_j}^2
= \frac{1}{\norm{L}^2} \norm{V^{\mathrm{geom}}}_{\mathrm{Fro}}^2.
\]
This shows that $-\sum_{j=1}^m e_j^T (V^{\mathrm{geom}})^T S_B^+ V^{\mathrm{geom}} e_j$ is a strictly coercive quadratic 
functional on the affine linear space of matrices fulfilling the constraints \req{geom_interp1}--\req{geom_interp3}, which is a non-empty subset of $\range(L)$.
Hence, the minimization problem is uniquely solvable.\hfill $\Box$

\section{Numerical results}\label{sect:numerics}

In this section, we use the open source framework EIDORS developed by Adler and Lionheart \cite{adler2006uses}, available on \texttt{eidors.org}, to numerically evaluate how good the interpolated measurements agree with the true ones and to test the performance of the matrix-based monotonicity method on interpolated data.

In our first example, the imaging domain $\Omega$ is the two-dimensional unit circle, the reference conductivity is $\sigma_0=1$, and the true conductivity is $\sigma=1+\chi_{D_1}+\chi_{D_2}+\chi_{D_3}$ with a larger half-ellipsoidal inclusion $D_1$ and two smaller circular inclusions $D_2$ and $D_3$,
see figure~\ref{fig:2D_setting}. We use EIDORS to simulate difference EIT measurements $V:=U(\sigma)-U(\sigma_0)\in \R^{m\times m}$ for an adjacent-adjacent driving pattern with $m$ electrodes (including voltages on current-driven electrodes) as described in section \ref{Subsec:setting}. The sensitivity matrix is also obtained from EIDORS but with another FEM grid to avoid an inverse crime. Figure \ref{fig:2D_setting} shows $\sigma_0$ and $\sigma$ for a setting with $m=32$ electrodes, and the FEM grids used for calculating the measurements $U(\sigma_0)$, $U(\sigma)$ and the sensitivity matrix $S$. The FEM grid for the latter is also used as the pixel partition for the reconstruction. Note that the choice of the reconstruction grid is ad-hoc in our examples in this section. For a fixed reconstruction grid, the monotonicity method will
show the support of the conductivity difference up to the grid resolution in the limit of noiseless data and infinitely many electrodes. But it not yet clear how to choose an optimal reconstruction grid for a given amount of noise and number of electrodes (see however \cite{harrach2015resolution} for a method to evaluate whether certain reconstruction guarantees hold true on a given grid).

\begin{figure}
\begin{center}
\begin{tabular}{c c c}
\mbox{\includegraphics[height=4.5cm,trim=140 215 100 250,clip]{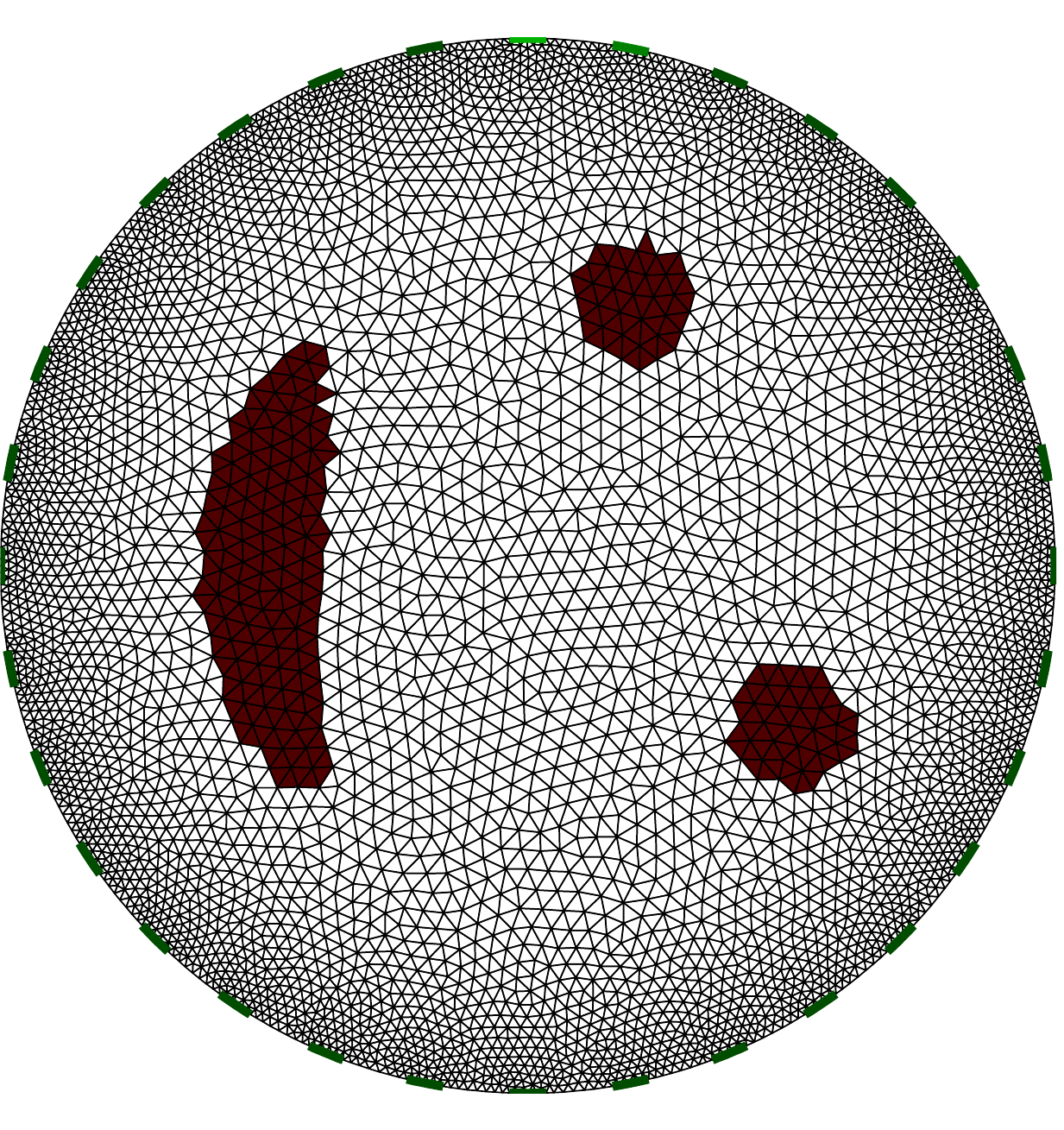}} &
\mbox{\includegraphics[height=4.5cm,trim=140 215 100 250,clip]{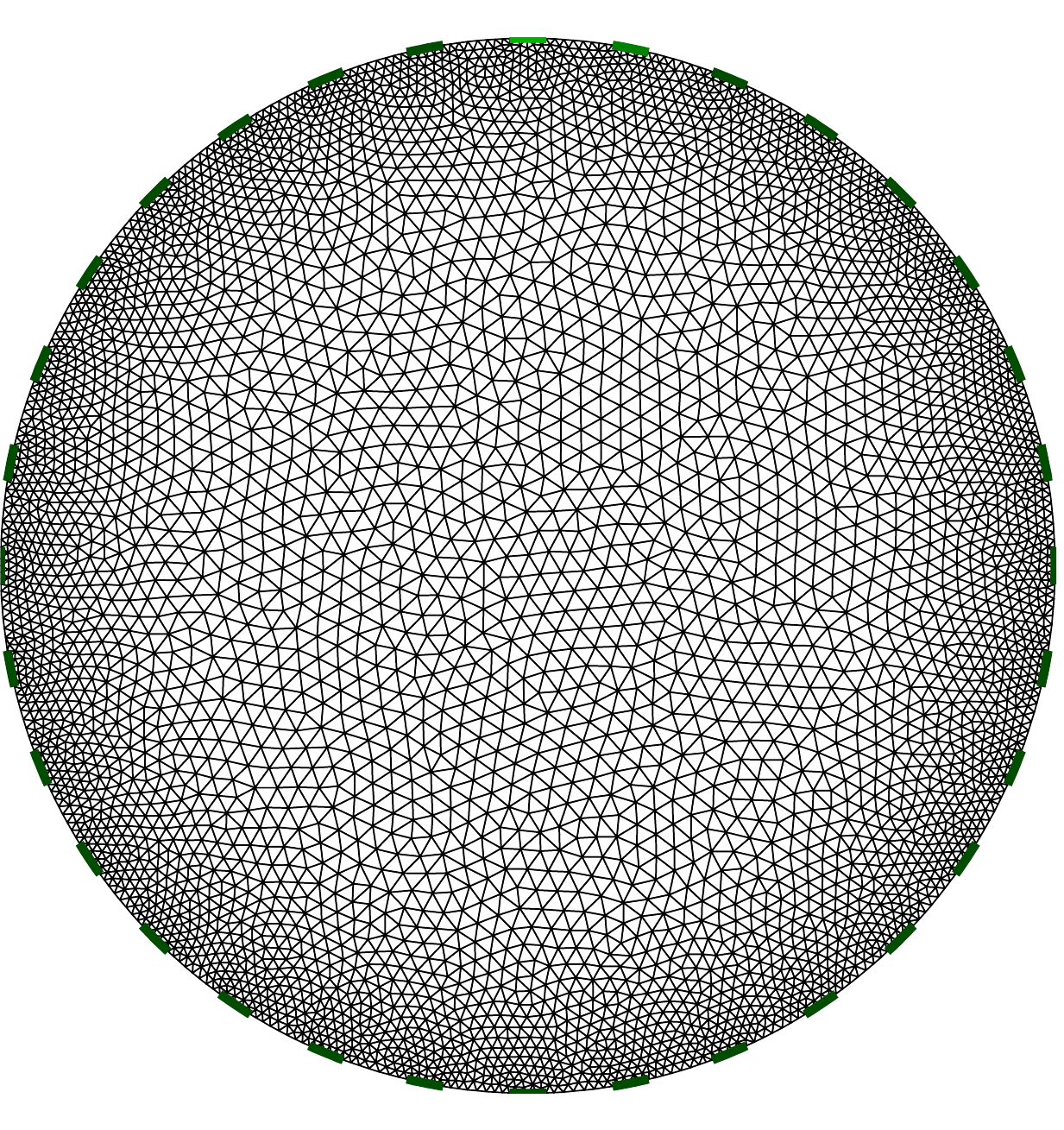}} &
\mbox{\includegraphics[height=4.5cm,trim=140 215 100 250,clip]{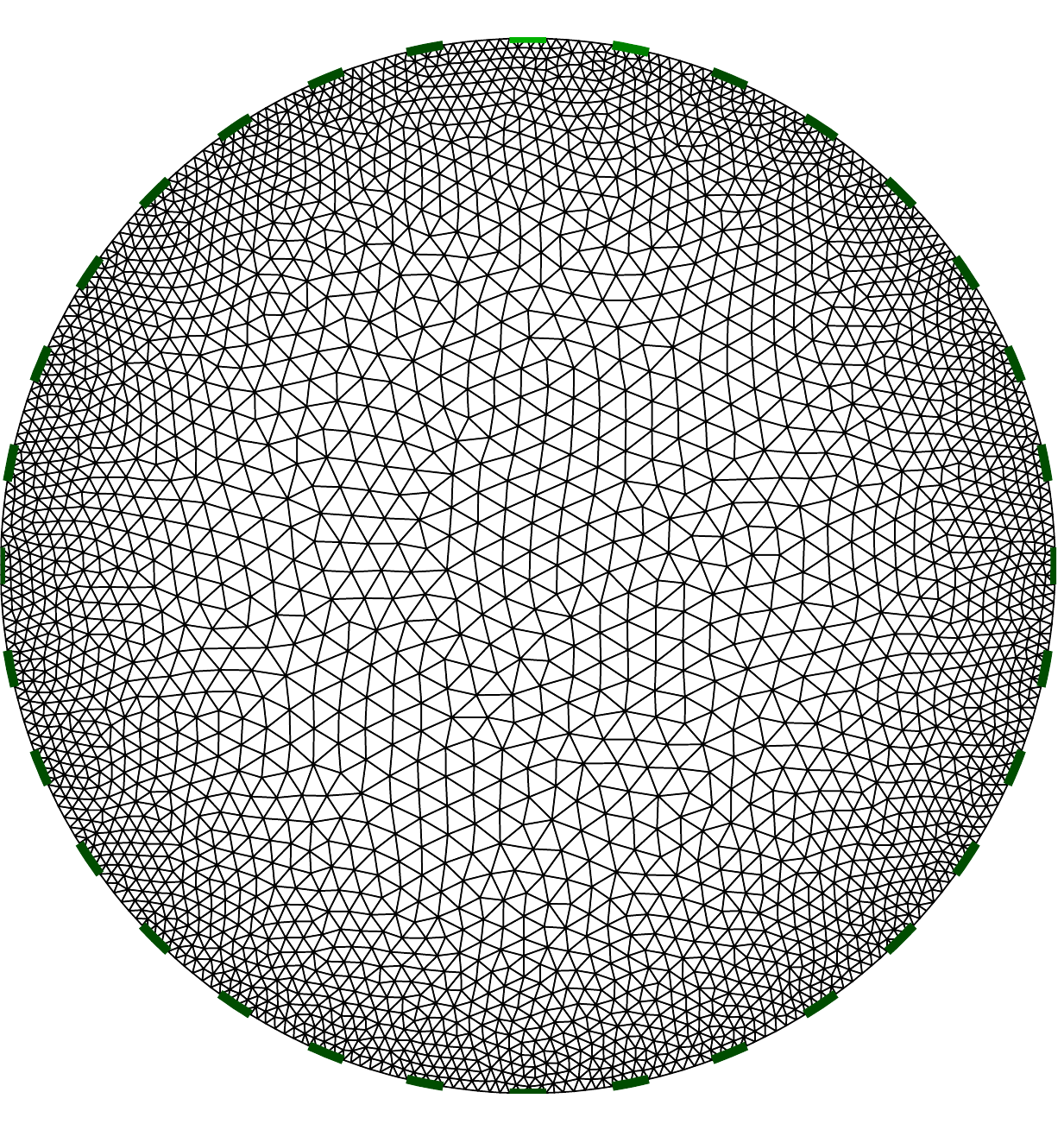}}
\end{tabular}
\end{center}
\caption{True conductivity $\sigma$ (left image) and reference conductivity $\sigma_0$ (middle image)
and the FEM grid used to calculate $U(\sigma)$ and $U(\sigma_0)$. 
The right image shows the FEM grid used to calculate the sensitivity matrix $S$.}
\label{fig:2D_setting}
\end{figure}

From the simulated EIT measurements $V\in \R^{m\times m}$, the entries corresponding to voltages on current-driven electrodes, $V_{jk}$ with $|j-k|<1$ (modulo $m$), are removed, and replaced using the simple linear interpolation method in subsection \ref{subsect:linear_interpolant}, and by our new geometrical interpolation method in subsection \ref{subsect:interpolant_geometry}. For the latter, we use as upper bound $B$ the union
of all pixels which intersect an open ball (centered at the origin) with radius $r>0$.

For a first impression of the performance of our new interpolation method consider figure \ref{fig:2D_electrodes_error}.
The black circles show the first 50 voltages taken columnwise from the difference EIT measurements $V$ for $m=24$ electrodes.
The 24th entry corresponds to the voltage between the 24th and the 1st electrode with current applied between the 1st and 2nd electrode,
the 25th-27th entry are the voltages between the 1st and the 2nd, the 2nd and the 3rd, and the 3rd and the 4th
electrode, resp., while the current is driven between the 2nd and the 3rd electrode. Accordingly, these four entries are
voltages on current driven electrodes. The red dashed line shows the values obtained by linear interpolation and
the black solid line shows the values obtained by our new geometric interpolation method with $r=0.7$. 
Our new method recovers the voltage on active electrodes much more precisely than the linear interpolation method.

\begin{figure}
\begin{center}
\mbox{\includegraphics[height=5cm,trim=70 410 40 240,clip]{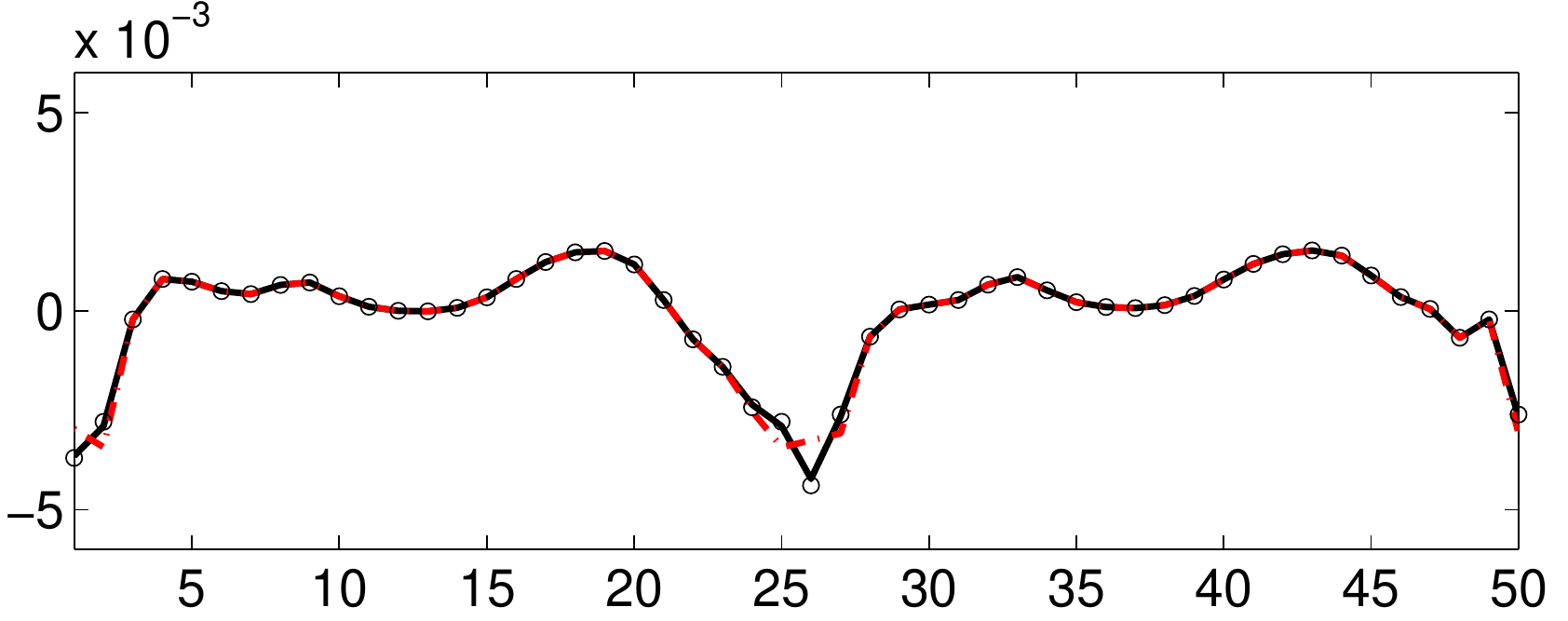}}
\end{center}
\caption{First 50 voltages measured in a setting with $m=24$ electrodes including voltages on 
current driven electrodes (black circles) and the interpolated values using simple linear interpolation
(red dashed line) and the new geometric interpolation method with $r=0.7$ (black line).}
\label{fig:2D_electrodes_error}
\end{figure}

Table \ref{table:2D_interpolation_error} lists the relative interpolation errors in the Frobenius norm
\[
\frac{\norm{V-V^\mathrm{lin}}_F}{\norm{V}_F}, \quad \mbox{ resp., } \quad \frac{\norm{V-V^\mathrm{geom}}_F}{\norm{V}_F}
\]
for different electrode numbers $m$ and radii $r$ of the upper bound $B$ .
The results for our new interpolation using geometry-specific smoothness are clearly superior to linear interpolation. The interpolation 
error falls below $1\%$ already for $32$ electrodes and an upper bound $B$ with $r=0.8$.

\begin{table}
\begin{center}
\begin{tabular}{l || r | r | r | r | r}
& $m=8$ & $m=16$ & $m=24$ & $m=32$ & $m=40$\\ \hline \hline
linear interpolation      & 75.44\% & 32.29\% & 15.65\% & 8.23\% & 5.22\% \\ \hline
geom.~interpol. ($r=0.9$) & 79.07\% & 31.03\% & 11.00\% & 3.77\% & 1.48\% \\  \hline
geom.~interpol. ($r=0.8$) & 69.33\% & 18.09\% &  3.98\% & 0.67\% & 0.21\% \\ \hline
geom.~interpol. ($r=0.7$) & 61.88\% & 12.17\% &  2.48\% & 0.41\% & 0.16\% 
\end{tabular}
\end{center}
\caption{Relative error (measured in the Frobenius norm) caused by replacing voltages on current driven
electrodes by interpolated values.}
\label{table:2D_interpolation_error}
\end{table}

We now study the feasibility of using interpolated measurements for the matrix-based monotonicity method
described in subsection \ref{subsec:matrix_methods}.
We simulate the measurement matrix $V\in \R^{m\times m}$ for $m=32$ electrodes (including the voltages on current-driven electrodes). Using a matrix $E \in \R^{m\times m}$ containing independently uniformly distributed random values in the interval $[-1,1]$, we simulate noisy measurements by setting
\[
V_\delta:=V+\delta \norm{V}_F \, \frac{E}{\norm{E}_F}
\]
for a given relative noise level $\delta>0$. As above, we then remove the voltages on current-driven electrodes
in $V_\delta$ and replace them by interpolated values using linear interpolation and our new geometry-specific interpolating method (again with $r=0.7$, $r=0.8$, and $r=0.9$) to obtain the matrix $V^\mathrm{lin}_\delta$, resp., $V^\mathrm{geom}_\delta$. Our aim is to compare how much the monotonicity-based methods described in subsection \ref{subsec:matrix_methods} are affected by using the interpolated
measurements $V^\mathrm{lin}_\delta$, resp., $V^\mathrm{geom}_\delta$ instead of $V_\delta$.

We implement the monotonicity method's indicator function \req{monotonicity_indicator} in the following form. 
As in \req{sensitivity_element_wise}, let $S_i\in \R^{m\times m}$ contain the entries of the $i$-th column of the sensitivity-matrix $S\in \R^{m^2\times r}$. Since we deal with noisy measurements, we calculate the following regularized version of \req{beta_montonicity}
\begin{eqnarray*}\labeq{beta_delta}
\beta_i^\delta:=\max \{\beta\geq 0:\   \beta S_i\geq -|V_\delta| -\delta \norm{V_\delta}_F I \},
\end{eqnarray*}
so that $\beta_i^\delta>0$ is guaranteed to hold for every pixel inside the inclusions.

To calculate $\beta_i^\delta$, note that $|V| +\delta \norm{V_\delta}_F I$ is positive definite
and thus possesses an invertible matrix square root $A\in \R^{m\times m}$ with $A^*A=|V_\delta| +\delta \norm{V_\delta}_F I$.
For each $\beta>0$, we have that 
\[
\beta S_i + |V_\delta| +\delta \norm{V_\delta}_F I = \beta S_i + A^* A = A^*(A^{-*} \beta S_i A^{-1} + I)A,
\]
so that the definiteness condition $\beta S_i\geq -|V| -\delta \norm{V_\delta}_F I$ is equivalent to
\begin{equation}\labeq{beta_calc_hilf}
\beta  A^{-*} S_i A^{-1} + I\geq 0.
\end{equation}
Since obviously $S_i\leq 0$, the condition \req{beta_calc_hilf} is fulfilled if and only if no eigenvalue
of $A^{-*} S_i A^{-1}$ is smaller (more negative) than $-\frac{1}{\beta}$. Hence,
\[
\beta_i^\delta=-\frac{1}{\lambda_1}
\]
where $\lambda_1$ is the smallest (most negative) eigenvalue of $A^{-*} S_i A^{-1}\in \R^{m\times m}$.

In the same way, we calculate the monotonicity indicator functions
for the interpolated measurements $V^\mathrm{lin}_\delta$ and $V^\mathrm{geom}_\delta$. 
Figure \ref{fig:2D_reconstructions_beta_32} shows (for $m=32$ electrodes)
the plot of $x\mapsto \sum_{j=1}^r \beta_i^\delta\chi_i(x)$ for measurements including the correct
values on current-driven electrodes (first column), 
for geometrically interpolated measurements (with $r=0.7$, $r=0.8$ and $r=0.9$ in the second, third and fourth column, respectively),
and for linearly interpolated measurements (fifth column). The first line in figure \ref{fig:2D_reconstructions_beta_32}
corresponds to the almost noiseless case ($\delta=0.001\%$), for the second line we added relative noise with $\delta=0.1\%$.
The reconstructions are plotted with the EIDORS standard setting that all pixels with values less than $25\%$ of the maximal value
are set to transparent color. Additionally we cropped the color scale so that all pixels with values more than $50\%$ of the maximal value
are plotted with the same color. The results indicate that measurements on current-driven electrodes can well be replaced by interpolated measurements.

\begin{figure}
\begin{center}
\begin{tabular}{c c c c c}
\mbox{\includegraphics[height=2.6cm,trim=140 215 100 250,clip]{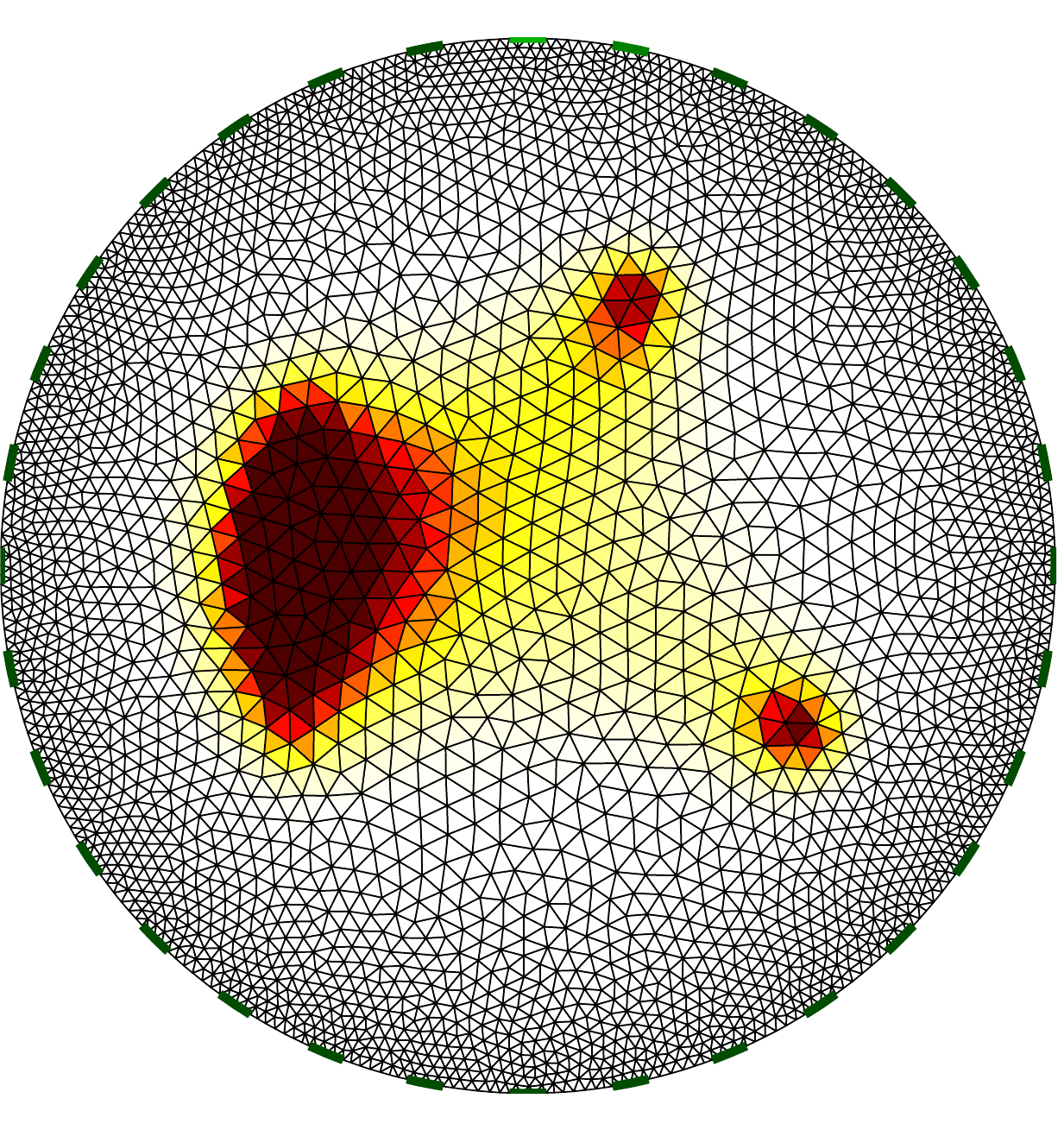}} &
\mbox{\includegraphics[height=2.6cm,trim=140 215 100 250,clip]{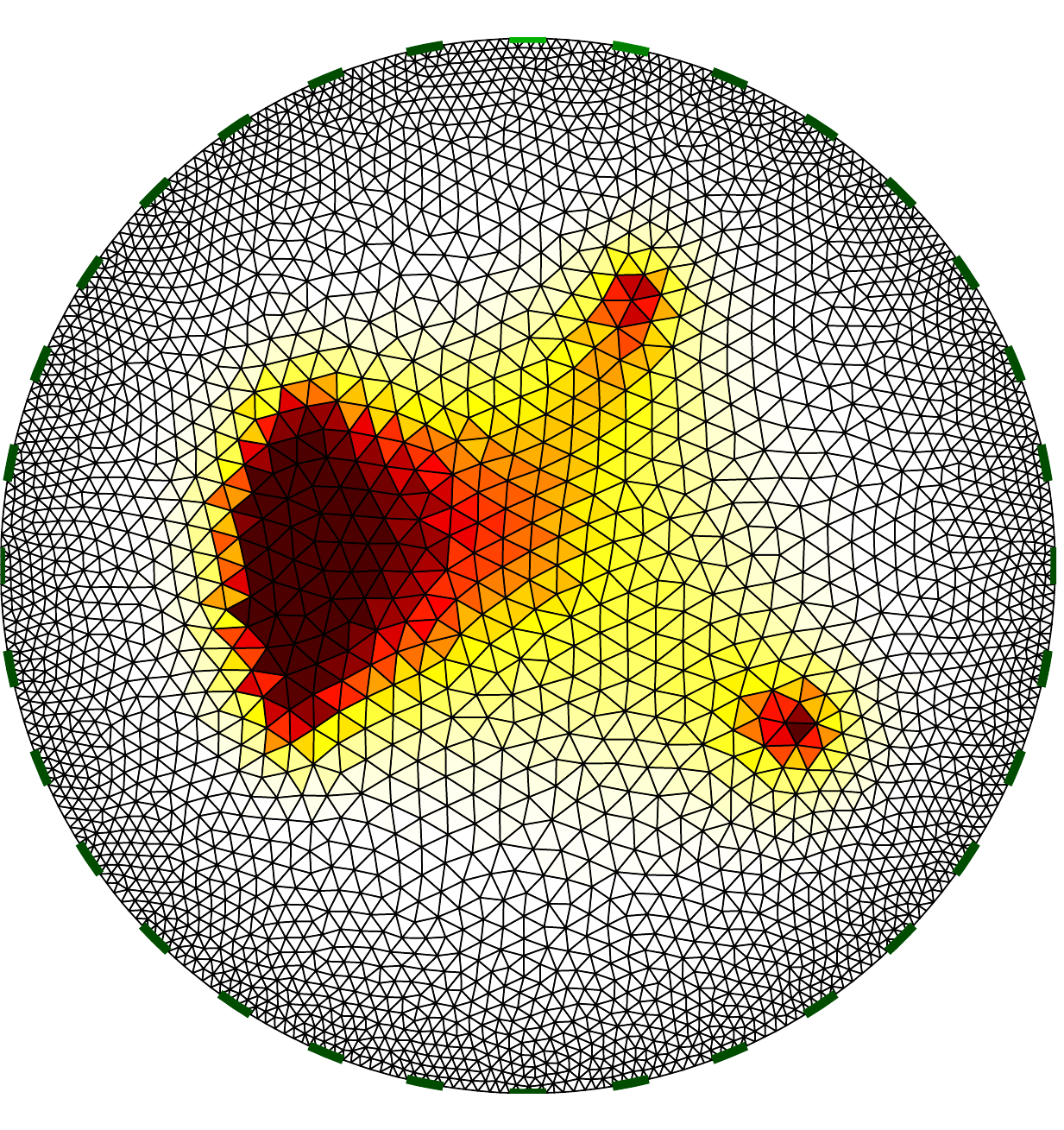}} &
\mbox{\includegraphics[height=2.6cm,trim=140 215 100 250,clip]{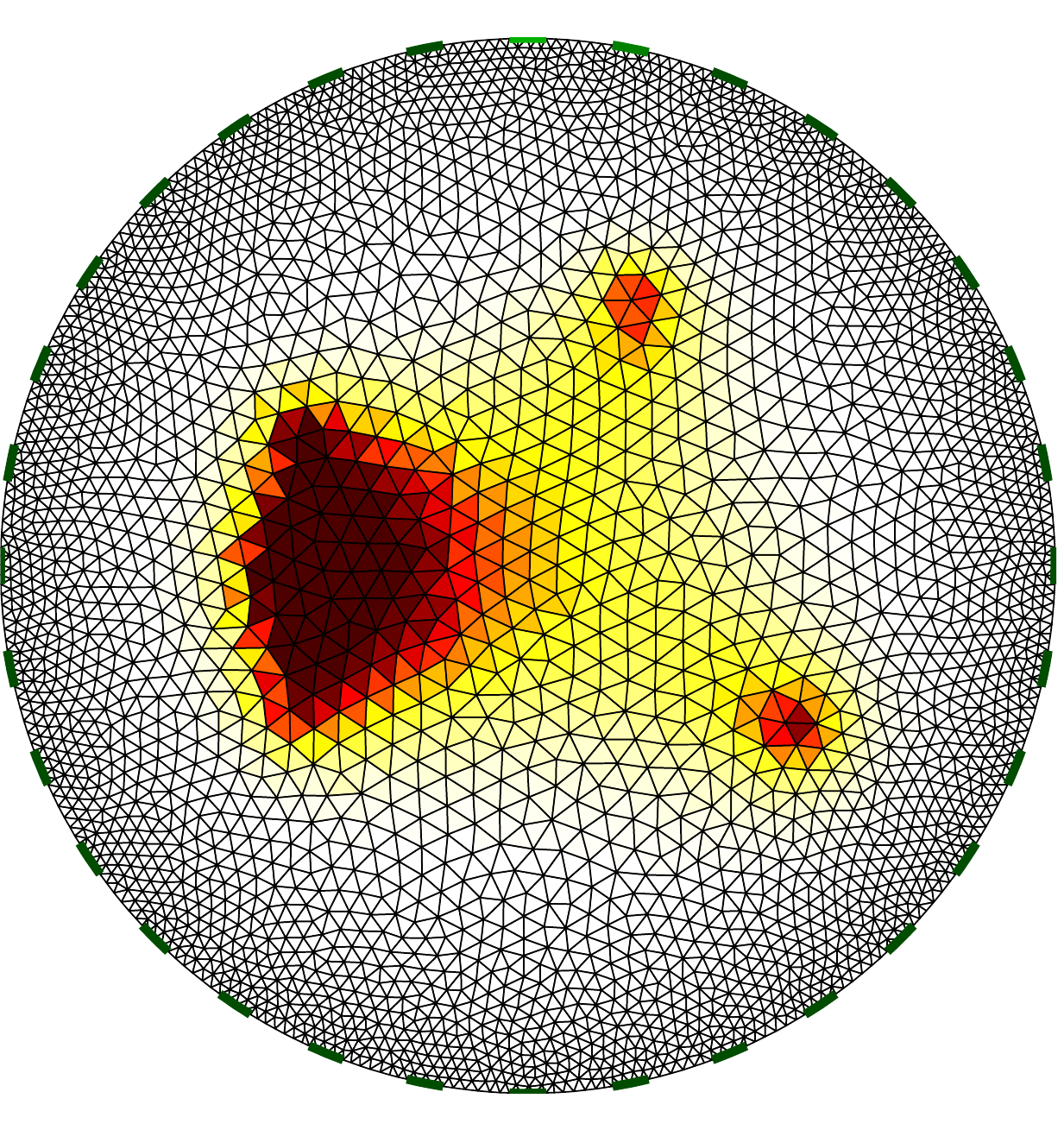}} &
\mbox{\includegraphics[height=2.6cm,trim=140 215 100 250,clip]{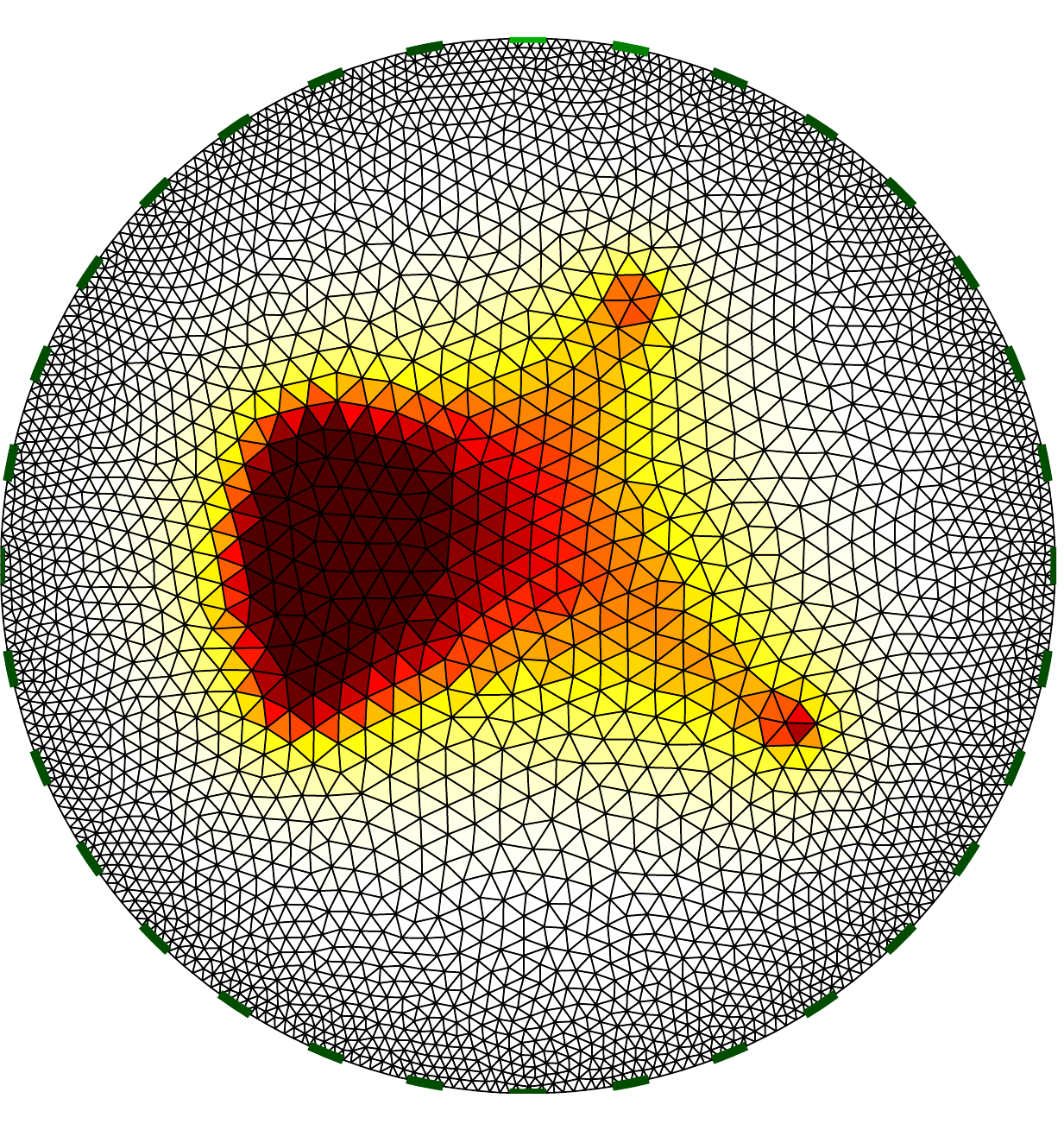}} &
\mbox{\includegraphics[height=2.6cm,trim=140 215 100 250,clip]{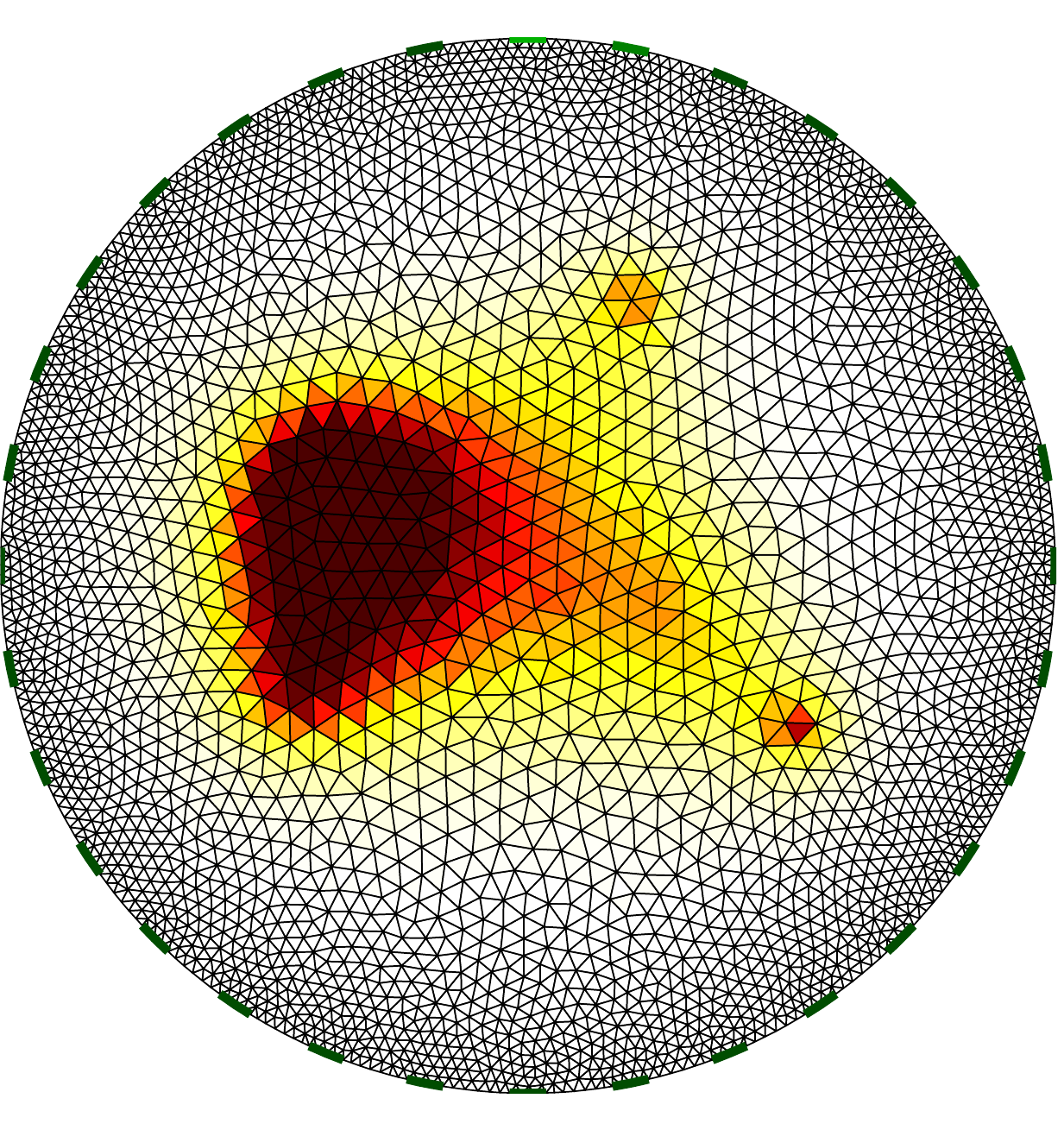}}\\
\mbox{\includegraphics[height=2.6cm,trim=140 215 100 250,clip]{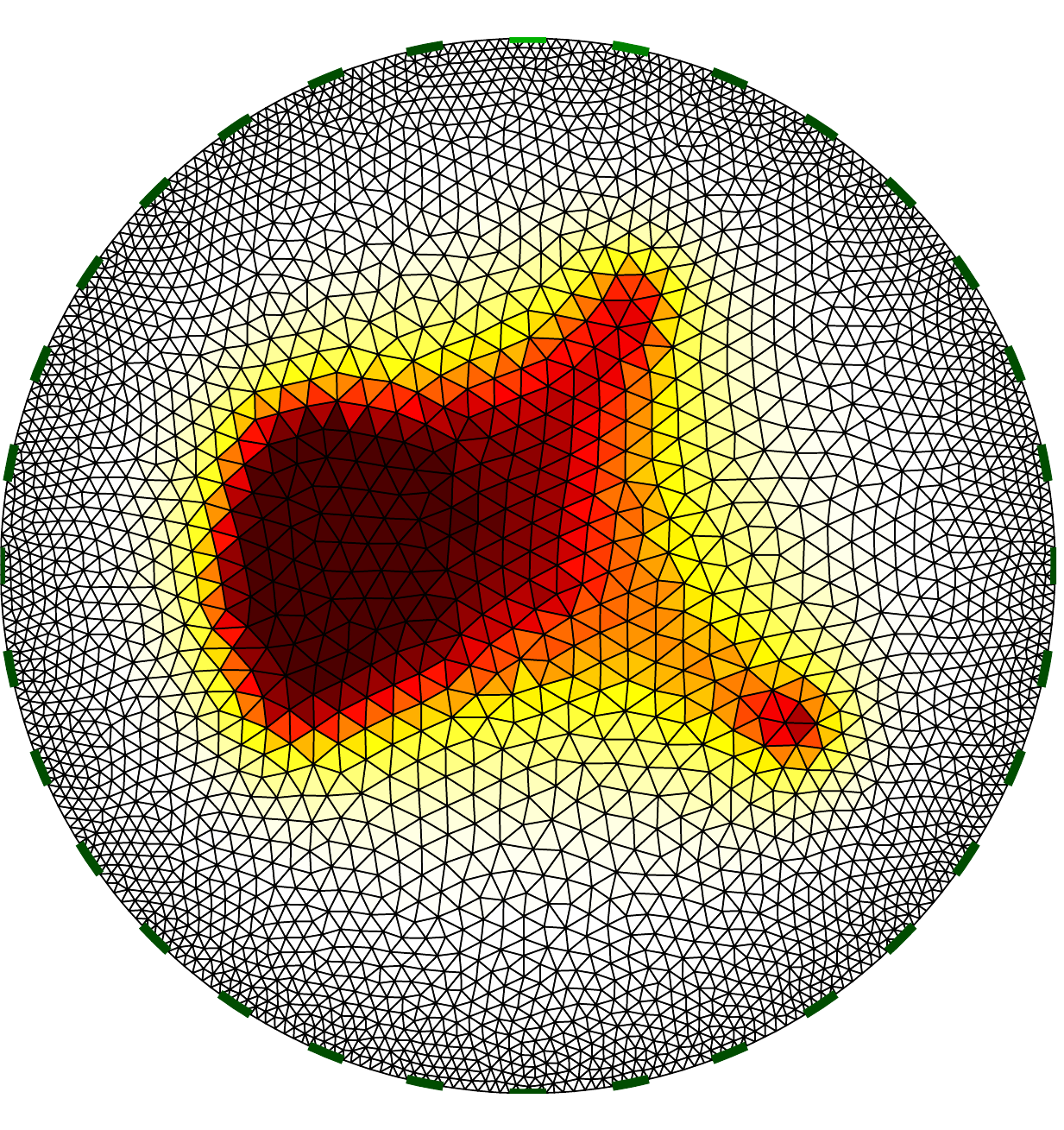}} &
\mbox{\includegraphics[height=2.6cm,trim=140 215 100 250,clip]{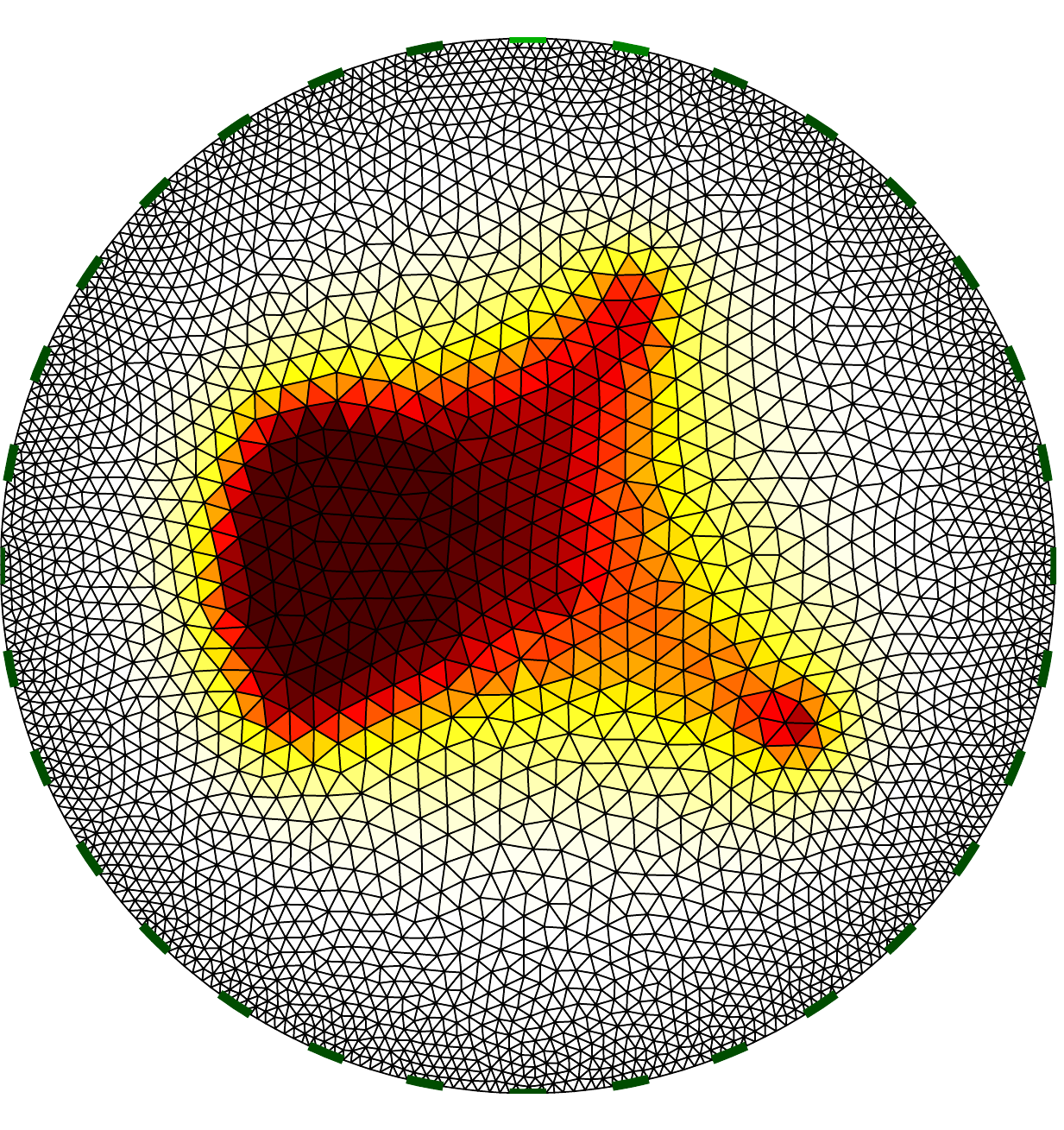}} &
\mbox{\includegraphics[height=2.6cm,trim=140 215 100 250,clip]{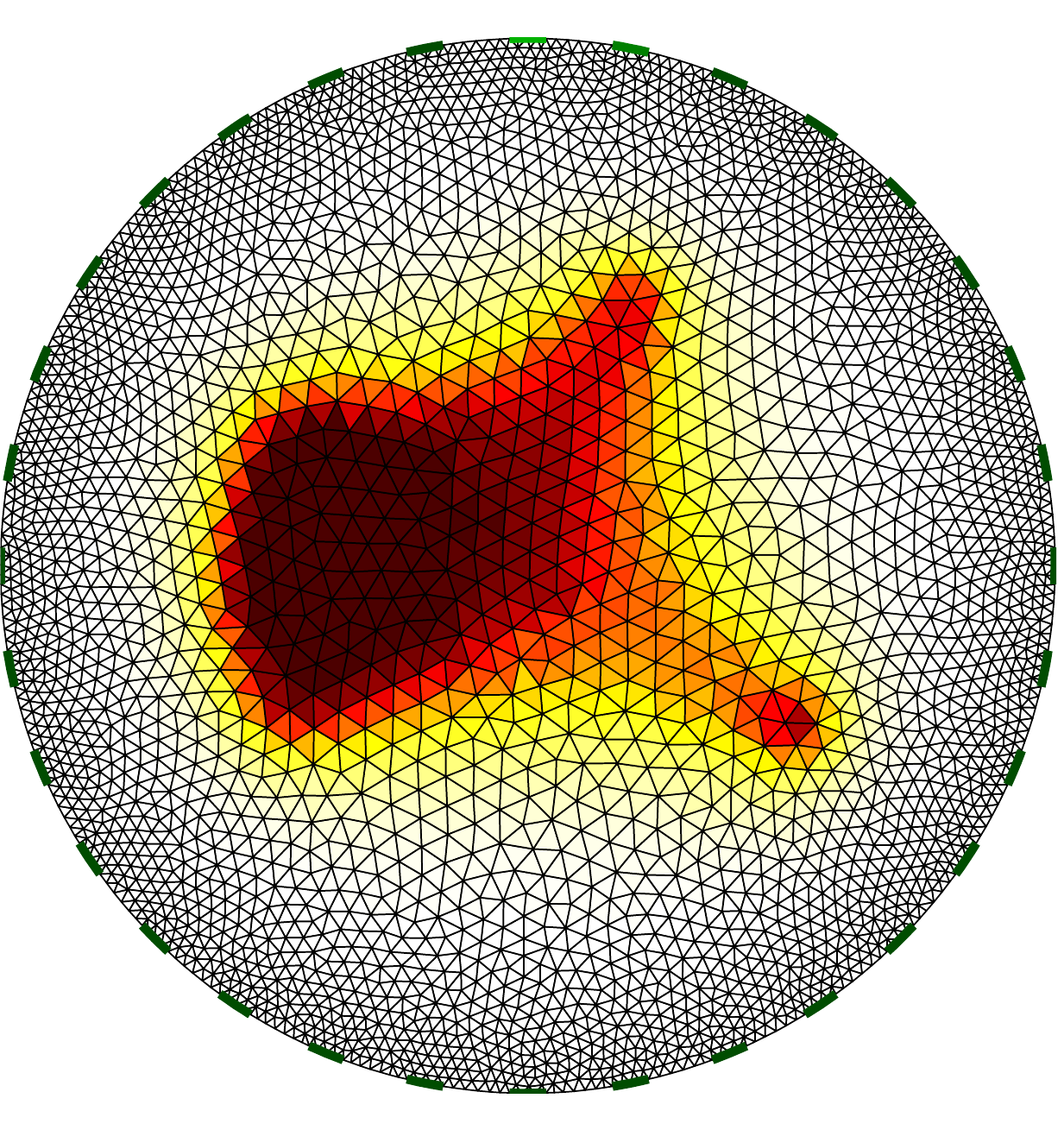}} &
\mbox{\includegraphics[height=2.6cm,trim=140 215 100 250,clip]{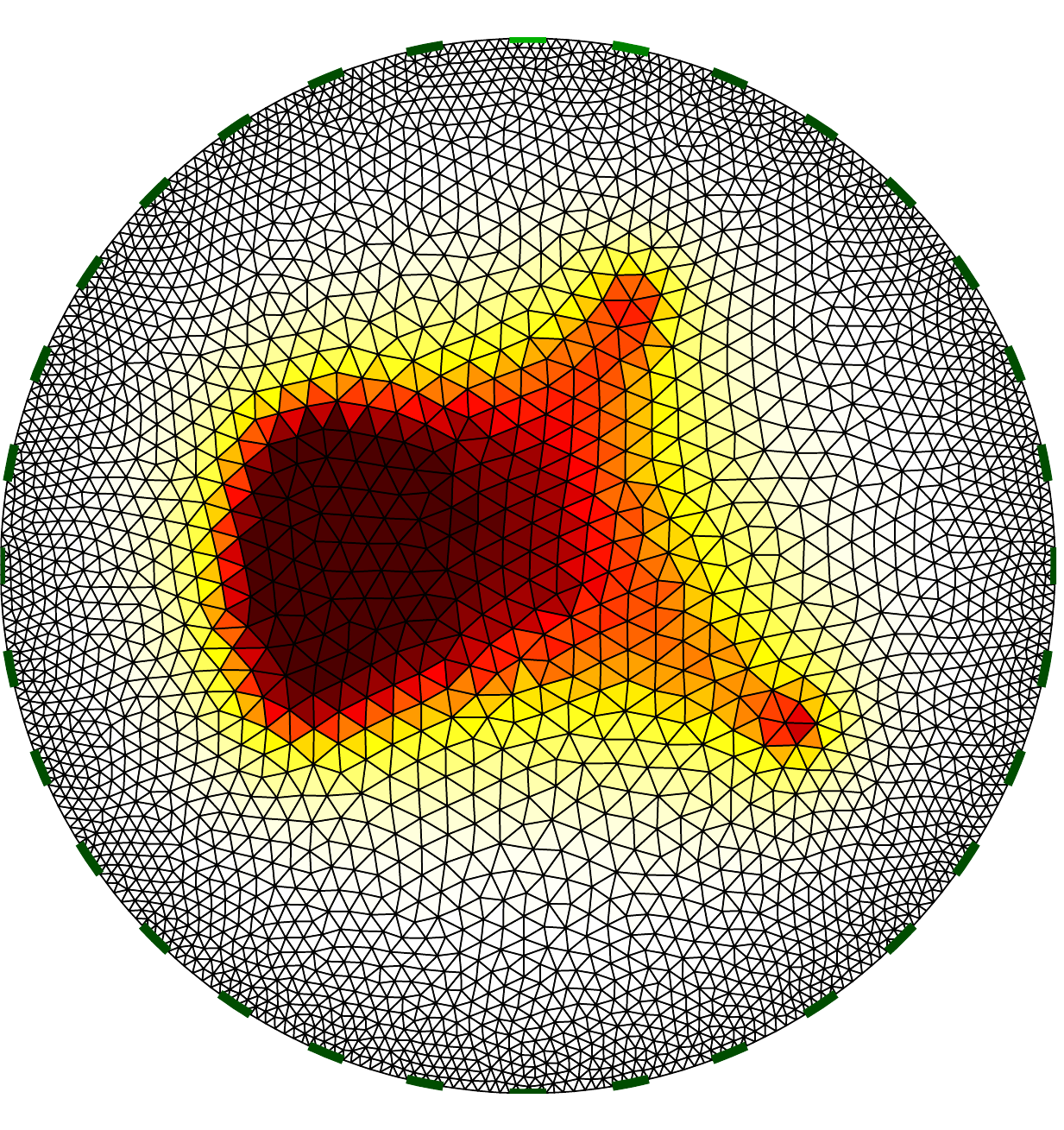}} &
\mbox{\includegraphics[height=2.6cm,trim=140 215 100 250,clip]{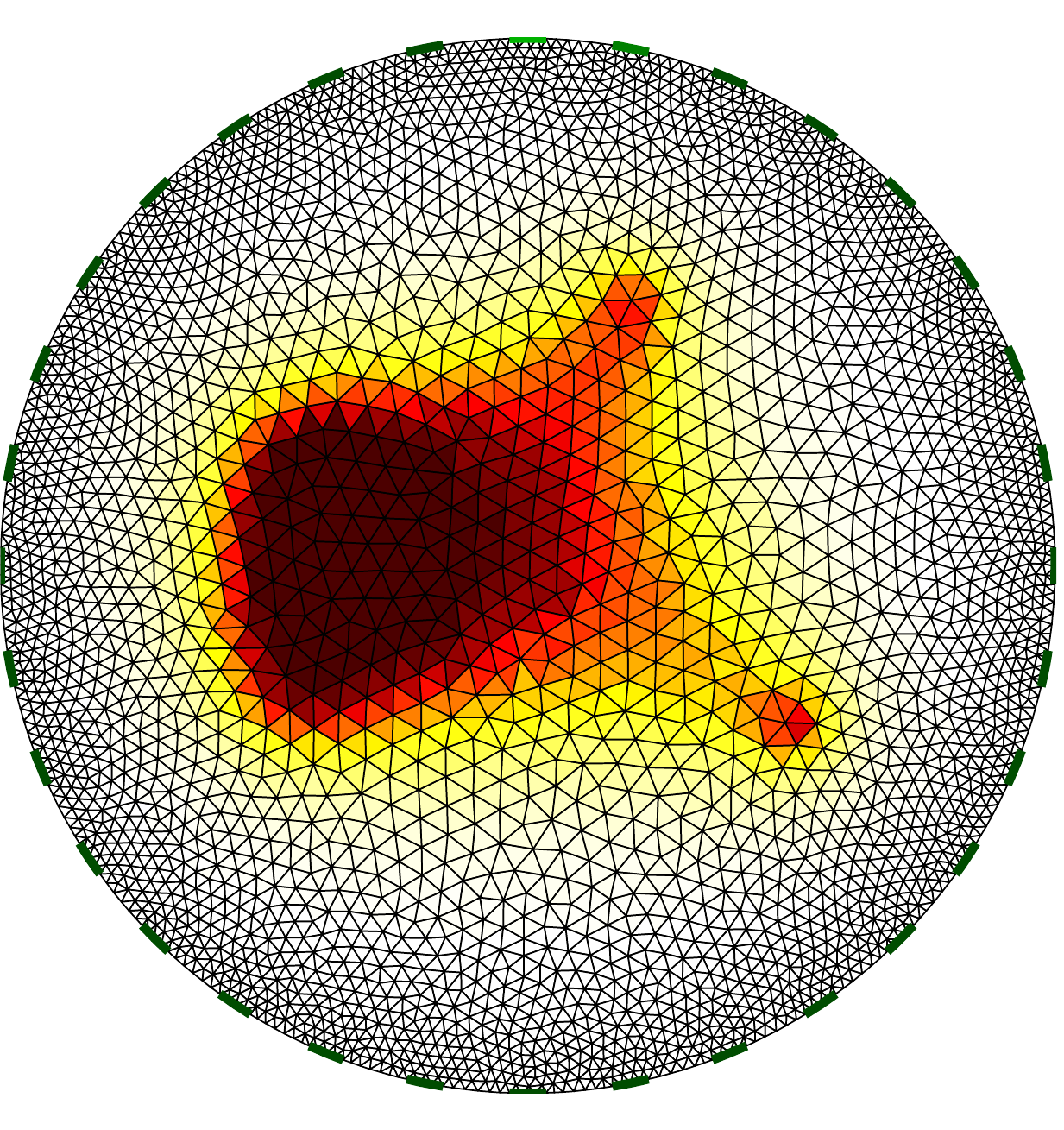}}
\end{tabular}
\end{center}
\caption{Monotonicity-based reconstructions using measurements on $32$ electrodes including voltages on current driven electrodes (1st column) and after replacing them with geometrically interpolated values (with $r=0.7$, $r=0.8$ and $r=0.9$ in the 2nd, 3rd and 4th column),
and with linearly interpolated values (5th column). First line contains almost noiseless data ($\delta=0.001\%$), second line contains relative noise with $\delta=0.1\%$. 
}
\label{fig:2D_reconstructions_beta_32}
\end{figure}

We also test our new interpolation method on a 3D example, where the imaging domain $\Omega$ is a cylinder to which electrodes are 
attached in $3$ planes of $k$ electrodes each. The reference conductivity is $\sigma_0=1$, and the true conductivity is $\sigma=1+\chi_{D_1}+\chi_{D_2}$ with a larger, vertically aligned, cylindrical inclusion $D_1$ and a smaller circular inclusion $D_2$,
see figure~\ref{fig:3D_setting}. We used EIDORS to simulate difference EIT measurements $V:=U(\sigma)-U(\sigma_0)\in \R^{m\times m}$ for an adjacent-adjacent driving pattern with $m=3k$ electrodes, including voltages on current-driven electrodes. Note that the driving patterns
also contain measurements between electrodes on different planes. 
Figure \ref{fig:3D_setting} shows $\sigma_0$ and $\sigma$ for a setting with $k=24$ electrodes on each plane, and the FEM grids used for calculating the measurements $U(\sigma_0)$, $U(\sigma)$ and for calculating the sensitivity matrix $S$. 

\begin{figure}
\begin{center}
\begin{tabular}{c@{\qquad} c@{\qquad} c}
\mbox{\includegraphics[height=4cm,trim=200 245 160 280,clip]{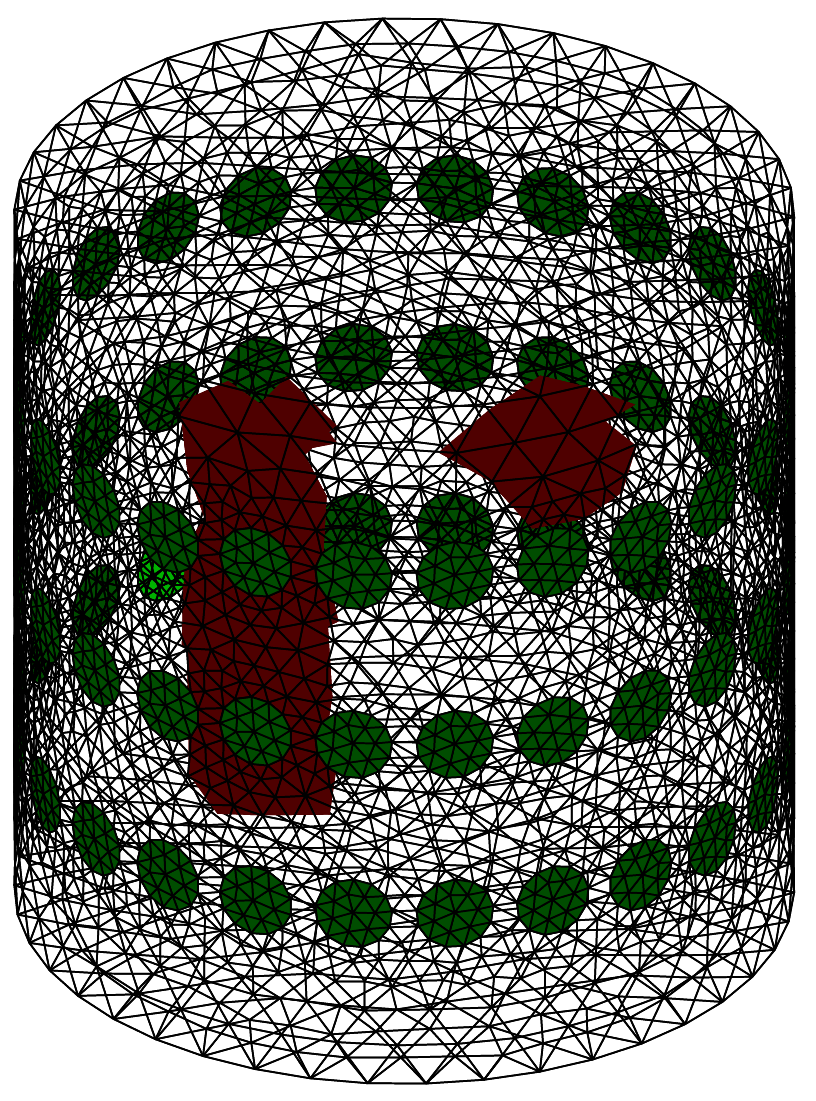}} &
\mbox{\includegraphics[height=4cm,trim=200 245 160 280,clip]{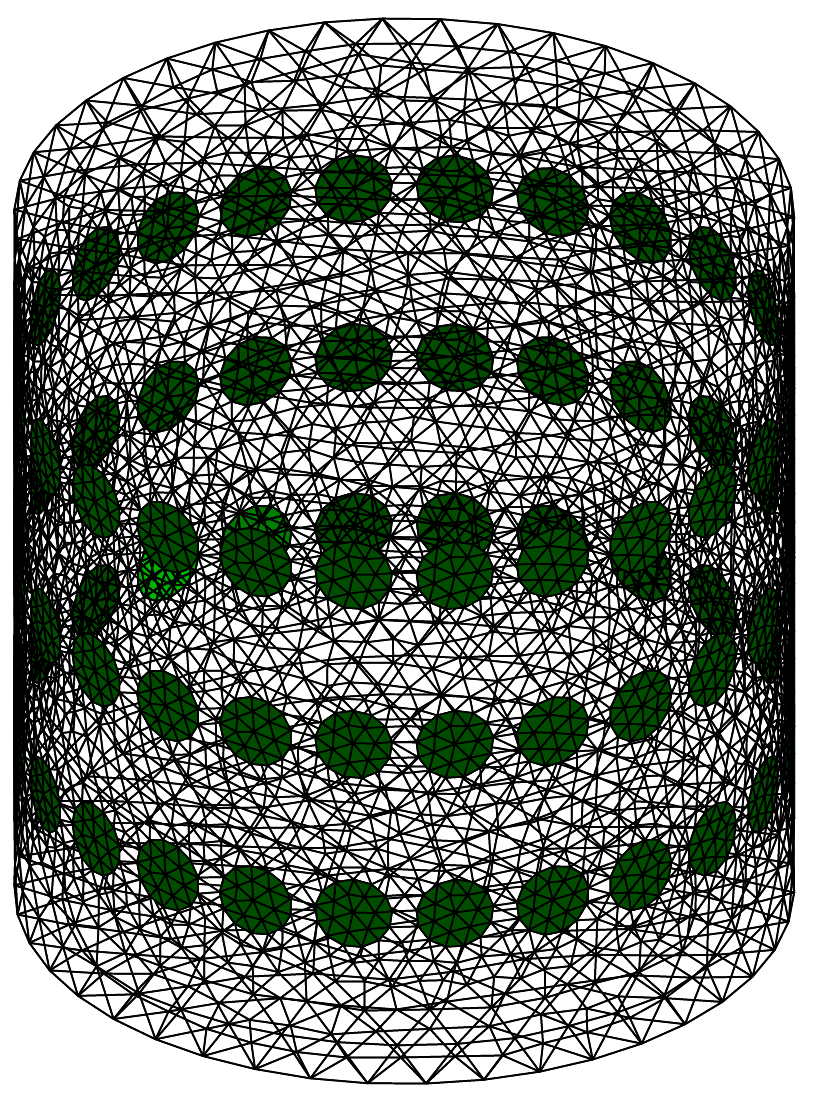}} &
\mbox{\includegraphics[height=4cm,trim=200 245 160 280,clip]{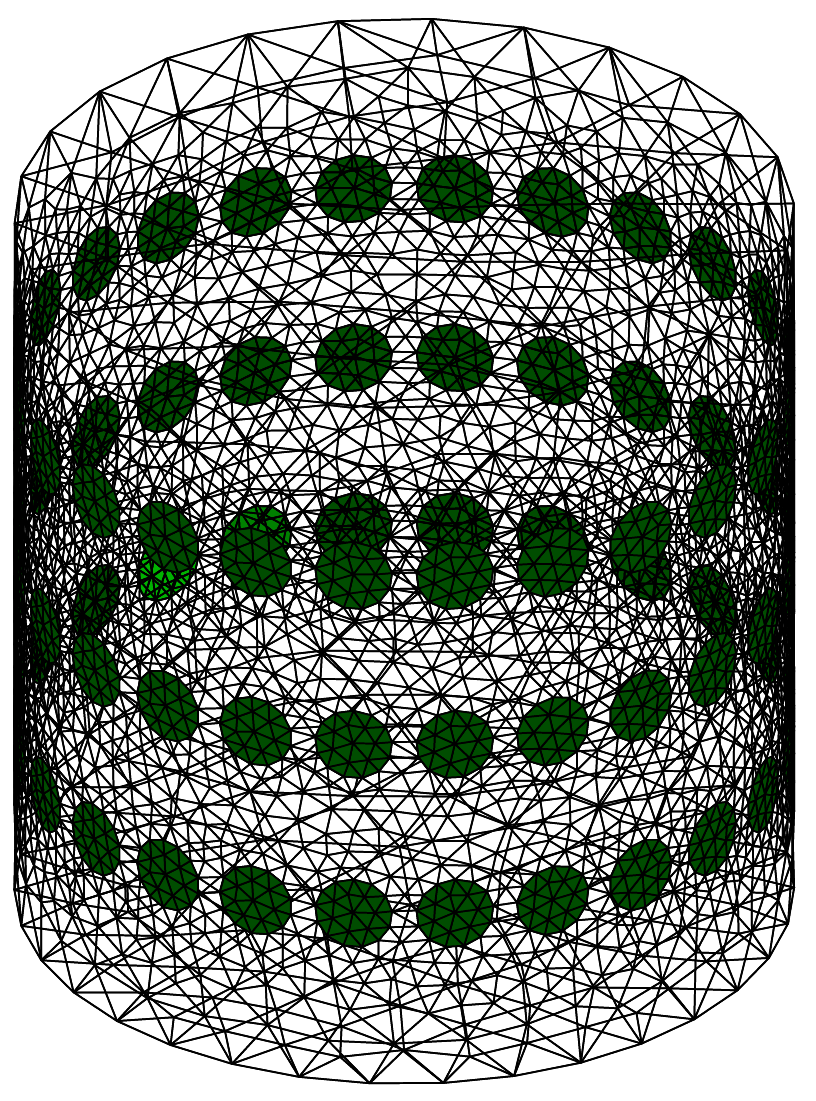}}
\end{tabular}
\end{center}
\caption{True conductivity $\sigma$ (left image) and reference conductivity $\sigma_0$ (middle image)
and the FEM grid used to calculate $U(\sigma)$ and $U(\sigma_0)$. 
The right image shows the FEM grid used to calculate the sensitivity matrix $S$.}
\label{fig:3D_setting}
\end{figure}

As in the two-dimensional case, we calculate the relative interpolation error that arises when measurements on current driven electrodes
are removed and replaced using our new geometrical interpolation method. For the latter, we use as upper bound $B$ the union of all pixels which intersect an open cylinder (centered at the origin) with radius $r=0.7$, $r=0.8$, and $r=0.9$.
Table \ref{table:3D_interpolation_error} lists the relative interpolation errors in the Frobenius norm for a
total number of $3\cdot 16$, $3\cdot 20$, $3\cdot 24$, and $3\cdot 28$ electrodes. The naive application of our simple linear interpolation method
fails on measurements between electrodes on different planes, and produced interpolation errors roughly around $40\%$ for all our settings.
Our new geometric interpolation method approximates the voltages on current driven electrodes well, the accuracy 
when using 3 planes of 28 electrodes is around $2.5\%$.

\begin{table}
\begin{center}
\begin{tabular}{l || r | r | r | r}
& $m=3\cdot 16$ & $m=3\cdot 20$ & $m=3\cdot 24$ & $m=3\cdot 28$\\ \hline \hline
geom.~interpol. ($r=0.9$) & 37.90\% & 23.93\% & 16.74\% & 7.88\%\\  \hline
geom.~interpol. ($r=0.8$) & 23.17\% & 11.82\% &  7.52\% & 3.17\%\\ \hline
geom.~interpol. ($r=0.7$) & 17.51\% &  8.04\% &  6.81\% & 2.56\%
\end{tabular}
\end{center}
\caption{Relative error (measured in the Frobenius norm) caused by replacing voltages on current driven
electrodes by interpolated values.}
\label{table:3D_interpolation_error}
\end{table}

Figure \ref{fig:3D_reconstructions_beta} shows the results of using the monotonicity method
for the setting with $k=24$ electrodes on each of 3 planes. We only calculated the indicator $\beta_i$
for pixels between the upper and lower electrode plane. From left to right, the figure shows the reconstruction
from measurements including active electrode data, and after replacing them with geometrically interpolated values using
$r=0.7$, $r=0.8$ and $r=0.9$, and after replacing them with linearly interpolated values. A relative noise of $\delta=0.1\%$ was added to the measurements.
The reconstructions are plotted with the EIDORS standard setting that all pixels with values less than $25\%$ of the maximal value are set to transparent color. The monotonicity method works well with geometrically interpolated data. The reconstructions with interpolated data 
are almost indistinguishable from that using active electrode data, which is even better than what one would expect from the interpolation error in table~\ref{table:3D_interpolation_error}. Moreover, even with naively linearly interpolated data, a very rough estimate of the convex hull of the inclusions is obtained.

\begin{figure}
\begin{center}
\begin{tabular}{c c c c c}
\fbox{\includegraphics[height=3.3cm,trim=200 245 160 280,clip]{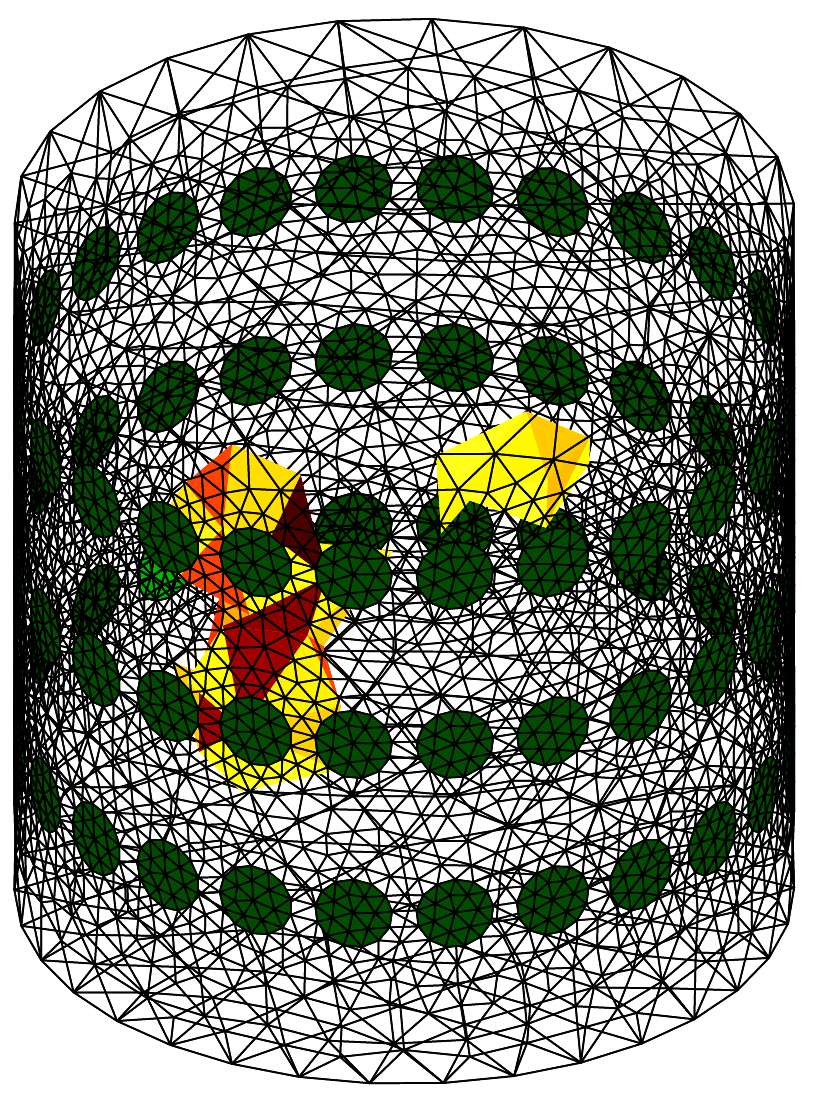}} &
\fbox{\includegraphics[height=3.3cm,trim=200 245 160 280,clip]{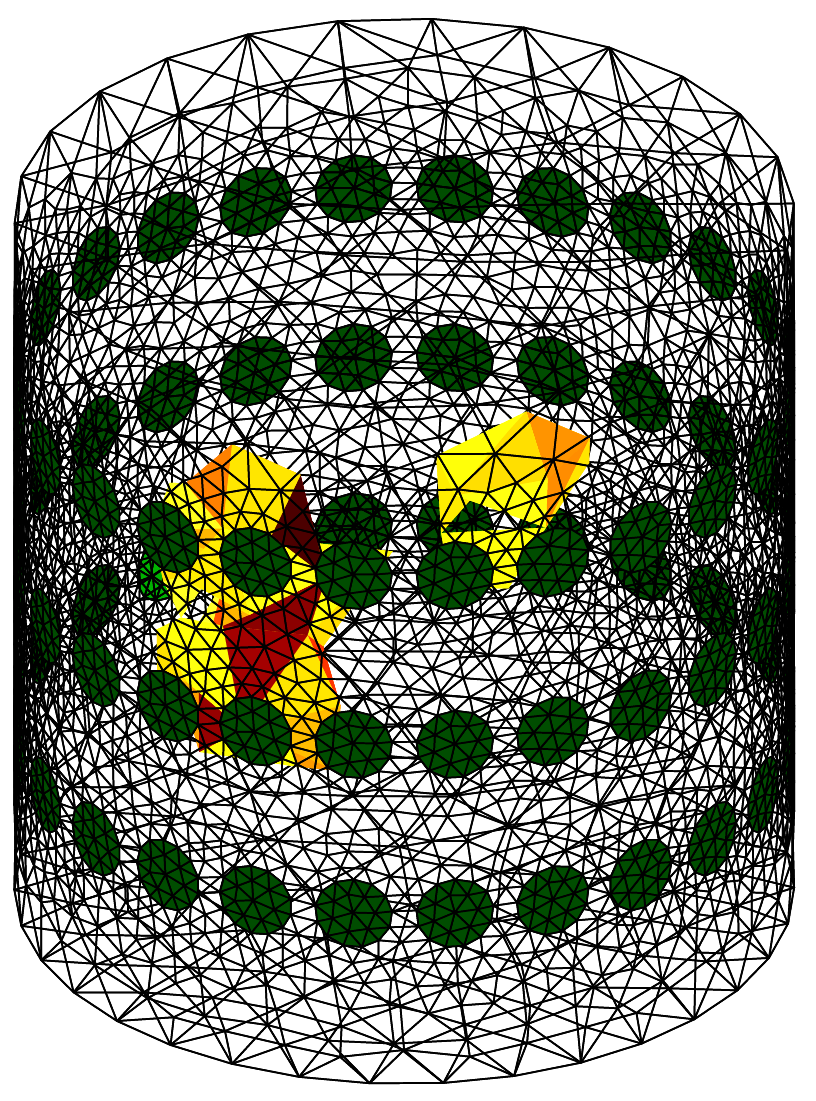}} &
\fbox{\includegraphics[height=3.3cm,trim=200 245 160 280,clip]{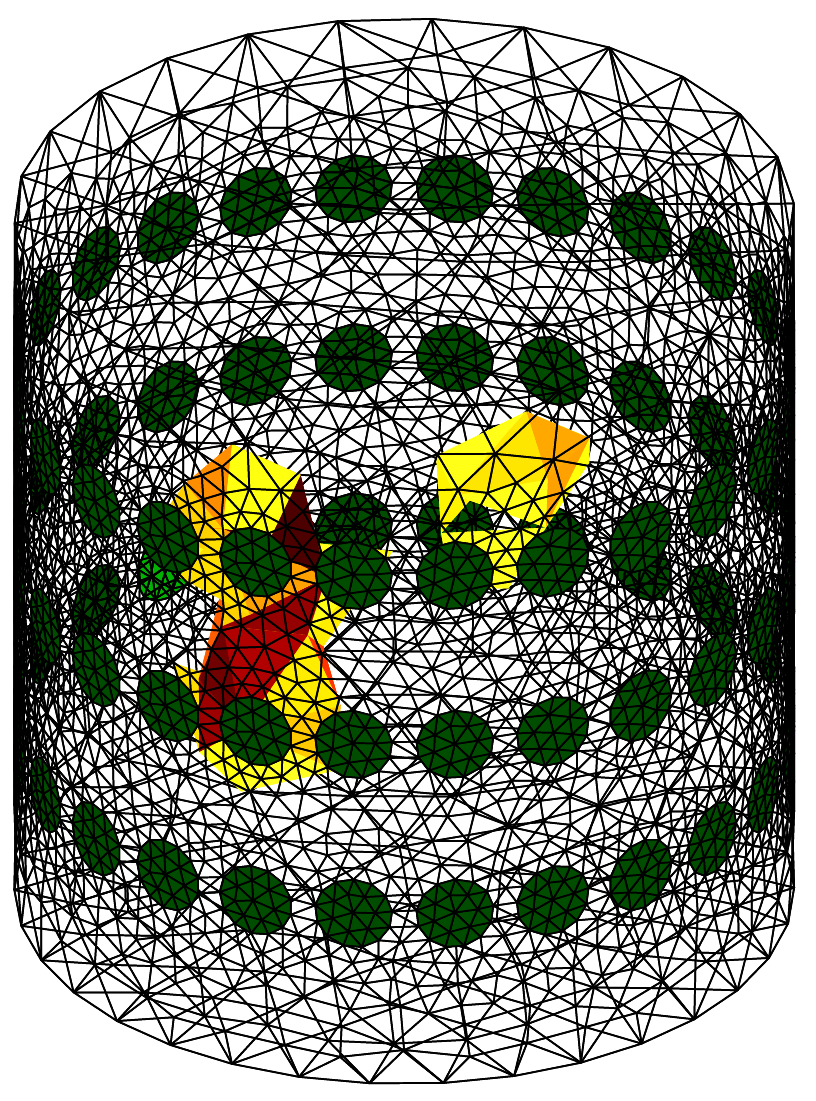}} &
\fbox{\includegraphics[height=3.3cm,trim=200 245 160 280,clip]{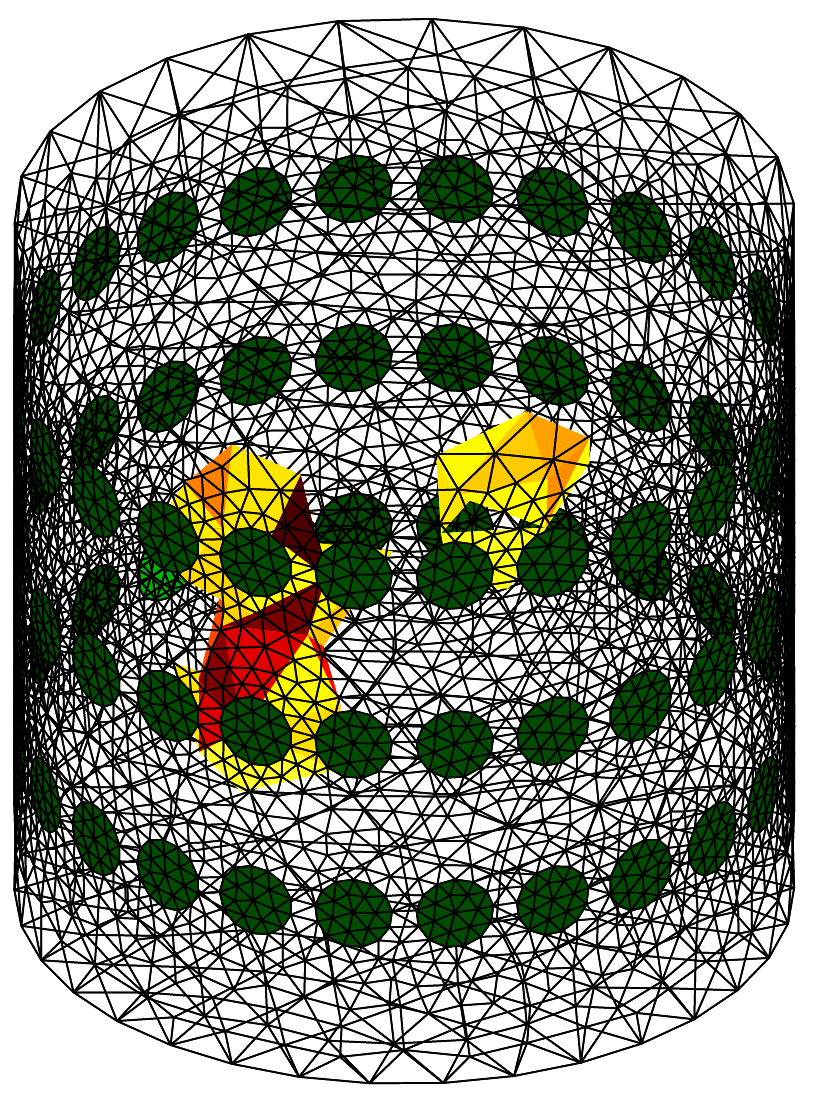}} &
\fbox{\includegraphics[height=3.3cm,trim=200 245 160 280,clip]{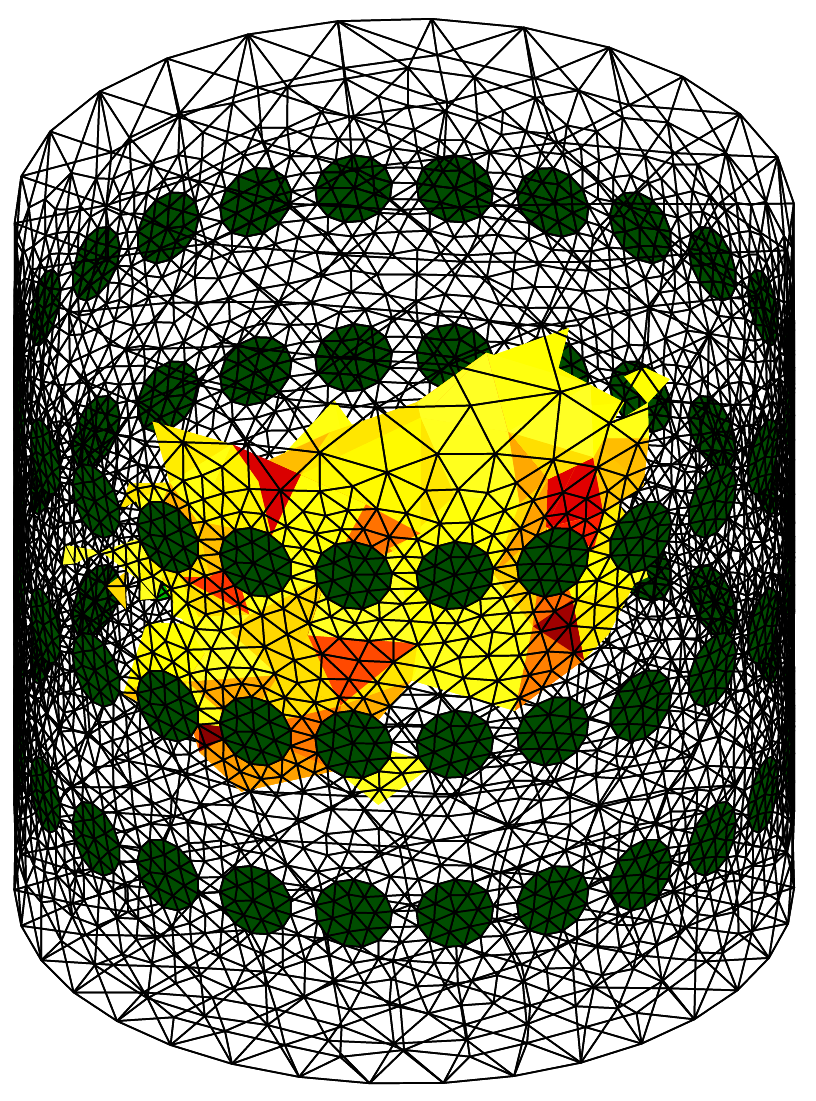}} 
\end{tabular}
\end{center}
\caption{Monotonicity-based reconstructions using measurements on $32$ electrodes including voltages on current driven electrodes (1st column) and after replacing them with geometrically interpolated values (with $r=0.7$, $r=0.8$ and $r=0.9$ in the 2nd, 3rd and 4th column),
and with linearly interpolated values (5th column). 
}
\label{fig:3D_reconstructions_beta}
\end{figure}


\section{Conclusion and discussion}\label{Sec:conlusion_and_discussion}

Rigorously justified reconstruction methods for EIT, such as monotonicity-based methods, require
voltage measurements on current-driven electrodes which are difficult to obtain in practice.
In this work, we have developed a method to interpolate the voltages on current-driven electrodes
from measurements on current-free electrodes for difference EIT settings. The interpolation 
is based on the geometry-specific smoothness of difference EIT data that 
arises from the fact that the difference potential solves an elliptic source problem.

Our method requires the a-priori knowledge of an upper bound of the support of the conductivity change.
The implementation of our new interpolation method is particularly simple and computationally cheap. It suffices to
identify the partition elements that belong to the upper bound of the support of the conductivity change, sum up the
corresponding columns of the standard sensitivity matrix, and to invert matrices that are formed from this sum. 
The method is well suited for real-time imaging. In a setting with $m$ electrodes and $r$ partition elements, the computational cost is merely of the order $O(r m^2)+O(m^3)$ which lowers to $O(m^2)$ if geometry-specific quantities can be precomputed. Our numerical experiments show that the method achieves a very high interpolation accuracy, and that a monotonicity-based reconstruction method performs well with interpolated data.

We have formulated the method for the interpolation of voltages on current-driven electrodes
in a real conductivity setting with adjacent-adjacent current driven patterns. The method
seems also applicable to interpolate other missing or erroneous data (e.g., bad electrode data in practical measurements),
and it should be extendable to complex conductivity (weighted frequency-difference) settings.

Our interpolation method is only applicable in difference EIT imaging and when the conductivity change occurs away from the boundary (which is arguably the most relevant case for recent commercial EIT applications). Voltage measurements in static EIT settings and voltage differences generated by a conductivity change at the boundary cannot be expected to possess the necessary smoothness properties to predict missing measurements by interpolation. Also, of course, interpolation is only necessary for EIT systems (such as the recent commercial ones) that do not provide reliable voltage measurements on current-driven electrodes. 

Finally, let us stress that voltages on current-driven electrodes are only
required by rigorously justified reconstruction methods such as the monotonicity method described herein. In practical applications, it is more common to use generic reconstruction algorithms that are based on minimizing a regularized data-fit functional, where missing measurements can simply be deleted from the residuum term.
There are no rigorous convergence results for such generic optimization-based approaches,
but they practically perform very well, and, so far, the stronger theoretical basis of rigorously justified methods has not lead to superior reconstructions in practice. 
Medical imaging algorithms must meet the highest standards, both
in their practical performance and their theoretical justification. 
Hence, it seems highly desirable to develop practically well-working algorithms for which at least certain properties can also be theoretically guaranteed.


\section*{References}

\bibliography{literaturliste}
\bibliographystyle{abbrv}

\end{document}